\theoremstyle{plain}
\newtheorem{thm}{Theorem}[section]
\newtheorem{lem}[thm]{Lemma}
\newtheorem{prop}[thm]{Proposition}
\newtheorem{cor}[thm]{Corollary}
\theoremstyle{definition}
\newtheorem{eg}[thm]{Example}
\theoremstyle{remark}
\newtheorem{rmk}[thm]{Remark}
\def\Z{{\mathbf Z}}
\def\Q{{\mathbf Q}}
\def\C{{\mathbf C}}
\def\A{{\mathbf A}}
\def\P{{\mathbf P}}
\def\cA{\mathcal{A}}
\def\cD{\mathcal{D}}
\def\cE{\mathcal{E}}
\def\cF{\mathcal{F}}
\def\cH{\mathcal{H}}
\def\cI{\mathcal{I}}
\def\cL{\mathcal{L}}
\def\cM{\mathcal{M}}
\def\cO{\mathcal{O}}
\def\RHom{{\mathcal R}{\mathcal Hom}}
\def\.{\cdot}
\def\^{\widehat}
\def\({\left(}
\def\){\right)}
\newcommand{\llbracket}{[\negthinspace[}
\newcommand{\rrbracket}{]\negthinspace]}
\newcommand{\llparenthesis}{(\negthinspace(}
\newcommand{\rrparenthesis}{)\negthinspace)}
\renewcommand{\and}{ \ \ \text{ and } \ \ }
\begin{document}

\author{Bradley Dirks}
\author{Mircea Musta\c{t}\u{a}}

\address{Department of Mathematics, University of Michigan, 530 Church Street, Ann Arbor, MI 48109, USA}

\email{bdirks@umich.edu}
\email{mmustata@umich.edu}

\thanks{The authors were partially supported by NSF grant DMS-1701622.}

\subjclass[2010]{14F10, 14B05, 32S22}

\begin{abstract}
Given a reduced effective divisor $D$ on a smooth variety $X$, we describe the generating function for the classes of the Hodge ideals
of $D$ in the Grothendieck group of coherent sheaves on $X$ in terms of the motivic Chern class of the complement of the support of $D$.
As an application, we compute the generating function for the Hilbert series of Hodge ideals of a hyperplane arrangement in terms of the 
Poincar\'{e} polynomial of the arrangement.

\end{abstract}

\title{The Hilbert series of Hodge ideals of hyperplane arrangements}

\maketitle

\section{Introduction} 

Let $X$ be a smooth complex algebraic variety and $D$ 
a reduced effective divisor on $X$. By making use of Saito's theory of mixed Hodge modules \cite{Saito-MHM}, one can attach to $D$
a sequence of ideals, the \emph{Hodge ideals} $I_p(D)$ for $p\geq 0$, that have been systematically studied in \cite{MP}. The first ideal
$I_0(D)$ is the multiplier ideal $\cI\big((1-\epsilon)D\big)$, for $0<\epsilon\ll 1$ and the higher ideals can be viewed as similar but more
refined measures of the singularities of $D$. In this note we focus on the classes of the Hodge ideals in the
Grothendieck group $K_0(X)$ of coherent sheaves on $X$, as encoded in the generating function 
$\sum_{p\geq 0}\big[I_p(D)\big]y^p\in K_0(X)\llbracket y\rrbracket$. 

Our main observation is that this generating function can be described in terms of the motivic Chern class of the inclusion $j\colon U\hookrightarrow X$,
where $U$ is the complement of the support of $D$. Recall that Brasselet, Sch\"{u}rmann, and Yokura introduced in \cite{BSY} this motivic Chern class,
which is a group homomorphism
$$mC_y\colon K_0({\rm Var}/X)\to K_0(X)[y],$$
where $K_0({\rm Var}/X)$ is the Grothendieck group of varieties over $X$.
The motivic Chern class is uniquely characterized by the fact that it commutes with push-forward with respect to proper morphisms
and that it satisfies a normalization condition, given by its value on the identity map of a smooth variety. 
The existence of the transformation can be easily deduced from Bittner's presentation \cite{Bittner}
of $K_0({\rm Var}/X)$ via blow-up relations. For us, it is important that there is 
an explicit description of the transformation via mixed Hodge modules: given a variety $f\colon Y\to X$ over $X$, the motivic Chern class
of $f$ can be described via the (de Rham complex associated to the) object $f_!\Q^H_Y$ in the derived category of mixed Hodge modules on $X$.
On the other hand, the Hodge ideals of $D$ describe the Hodge filtration on $j_*\Q^H_U[n]$, where $n=\dim(X)$, and duality allows us to relate
the graded objects associated to the de Rham complexes of $j_!\Q^H_U$ and $j_*\Q^H_U[n]$. This allows us to write down $\sum_{p\geq 0}\big[I_p(D)\big]y^p$ in terms of
$mC_y\big([U\hookrightarrow Y])$. For the precise statement, see Theorem~\ref{thm_main1}.

When we are in the presence of a group action, it is convenient to take this into account. There is an equivariant version
of the motivic Chern class and this has been computed in many interesting cases
(see for example \cite{AMSS} and \cite{FRW}). If $G$ is a linear algebraic group acting on $X$ and the divisor $D$ is
$G$-invariant, then the ideals $I_p(D)$ are preserved by the $G$-action and we may consider their classes in the Grothendieck group $K_0^G(X)$
of $G$-equivariant
coherent sheaves on $X$. Our formula for the generating function of the Hodge ideals holds more generally in this equivariant setting, using the
equivariant motivic Chern class. 

Our main application in this paper is to the case when $X=V$ is a complex vector space and $D=D_{\cA}$ is the divisor corresponding to an arrangement $\cA$
of linear hyperplanes in $X$. We consider the standard action of $T=\C^*$ on $X$, so that describing $I_p(D_{\cA})$ in $K_0^T(V)$ is equivalent to describing
the Hilbert series $H_{I_p(D_{\cA})}(t)$ of $I_p(D_{\cA})$. Since the equivariant motivic Chern class is easy to compute in this setting, we obtain the following description of the 
generating function of the Hilbert series $H_{I_p(D_{\cA})}$ in terms of the Poincar\'{e} polynomial $\pi(\cA,x)$ of the arrangement.

\begin{thm}\label{thm_main2}
If $\cA$ is a central hyperplane arrangement of $d$ hyperplanes in an $n$-dimensional complex vector space $V$
and if $D_{\cA}=\sum_{H\in\cA}H$, then
$$\sum_{p\geq 0}H_{I_p(D_{\cA})}(t)y^p=\frac{t^{d}}{(1-t)^n(1-t^dy)}\cdot\pi\big(\cA,(1-t)/t(1-t^{d-1}y)\big).$$
\end{thm}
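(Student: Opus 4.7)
My plan is to deduce Theorem~\ref{thm_main2} from the equivariant version of Theorem~\ref{thm_main1}, applied to $X=V$, $D=D_\cA$, and the standard scaling action of $T=\C^*$, after computing the equivariant motivic Chern class of $j\colon U\hookrightarrow V$ explicitly in terms of the Poincar\'e polynomial. The first step is to unpack the dictionary between equivariant $K$-theory and Hilbert series: under the scaling action, $K_0^T(V)\cong R(T)=\Z[t,t^{-1}]$ and, for any equivariant coherent sheaf $\cF$ on $V$, one has $H_\cF(t)=[\cF]/(1-t)^n$; in particular, the Koszul resolution yields $[\O_L]=(1-t)^{\codim L}$ for any linear subspace $L\subset V$, and $\Omega^1_L$ is equivariantly free with weight-$1$ generators, so $\lambda_y(\Omega^1_L)=(1+ty)^{\dim L}$. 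With this in hand, passing between classes of Hodge ideals and Hilbert series is simply division by $(1-t)^n$.

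Next, I would compute $mC_y\bigl([U\hookrightarrow V]\bigr)$ by M\"obius inversion on the intersection lattice $L(\cA)$. Stratifying $V=\bigsqcup_{L\in L(\cA)}L^\circ$ with $L^\circ=L\setminus\bigcup_{L'>L}L'$, the inclusion--exclusion formula gives
\[
[U\hookrightarrow V]=\sum_{L\in L(\cA)}\mu(V,L)\,[L\hookrightarrow V]\in K_0^T(\mathrm{Var}/V),
\]
and the normalization axiom of $mC_y$ together with push-forward along each closed immersion $L\hookrightarrow V$ gives $mC_y([L\hookrightarrow V])=(1+ty)^{\dim L}(1-t)^{\codim L}$. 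Summing over $L$ and using the definition $\pi(\cA,x)=\sum_L\mu(V,L)(-x)^{r(L)}$ with $r(L)=\codim L$ yields
\[
mC_y([U\hookrightarrow V])=(1+ty)^n\,\pi\!\Bigl(\cA,\frac{t-1}{1+ty}\Bigr).
\]

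Finally, I would substitute this expression into the equivariant form of Theorem~\ref{thm_main1}. Because $D_\cA$ has degree $d$, the equivariant line bundle $\O_V(D_\cA)$ has class $t^{-d}$, and this is the weight controlling the substitution in Theorem~\ref{thm_main1} --- it enters via the Tate twists from the duality $\mathbb{D}(j_!\Q^H_U)\cong j_*\Q^H_U[2n](n)$ that relates the two mixed Hodge modules whose Hodge-graded de Rham complexes produce $mC_y([U\hookrightarrow V])$ and $\sum_p[I_p(D_\cA)]y^p$ respectively. Carrying out this substitution, dividing by $(1-t)^n$, and simplifying transforms $(t-1)/(1+ty)$ into $(1-t)/\bigl(t(1-t^{d-1}y)\bigr)$ and produces the prefactor $t^d/\bigl((1-t)^n(1-t^d y)\bigr)$, giving the claimed identity. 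The main obstacle is the careful bookkeeping of this equivariant substitution: once the contributions of $[\O_V(D_\cA)]=t^{-d}$, the Tate twists, and the Hodge-filtration degree shifts are tracked correctly, the M\"obius-inversion computation above reduces the theorem to a routine algebraic simplification.
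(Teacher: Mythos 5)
Your proposal is correct and takes essentially the same route as the paper: identify $K_0^T(V)$ with $\Z[t,t^{-1}]$ so that classes become $(1-t)^n$ times Hilbert series, compute $mC_y^T([U_{\cA}\hookrightarrow V])$ from the classes $(1-t)^{\codim L}(1+ty)^{\dim L}$ of the linear subspaces $L\in L(\cA)$, and substitute into Theorem~\ref{thm_main1} with $a=[\cO_V(-D_{\cA})]=t^d$ (your intermediate expression $(1+ty)^n\pi\bigl(\cA,(t-1)/(1+ty)\bigr)$ agrees with the paper's $(1-t)^n\chi\bigl(\cA,(1+ty)/(1-t)\bigr)$, and the final bookkeeping checks out). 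The only substantive difference is that you obtain $[U_{\cA}\hookrightarrow V]=\sum_{L}\mu(V,L)[L\hookrightarrow V]$ by M\"obius inversion over the intersection lattice, in the spirit of Aluffi's computation, whereas the paper proves the equivalent identity $[U_{\cA}\hookrightarrow V]=\pi(\cA,-\eta_V)$ by induction using Deletion--Restriction; the paper itself remarks that the M\"obius-function argument works equally well, so this variation is immaterial.
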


By letting $y=0$ in the above theorem and recalling the identification of $I_0(D)$ with a multiplier ideal,
we obtain the following

\begin{cor}\label{cor_mult_id}\label{cor_main2}
If $\cA$ is a central hyperplane arrangement of $d$ hyperplanes in an $n$-dimensional complex vector space $V$
and if $D_{\cA}=\sum_{H\in\cA}H$, then the Hilbert series of the multiplier ideal $I=\cI\big((1-\epsilon)D_{\cA}\big)$, with $0<\epsilon\ll 1$, is given by 
$$H_I(t)=\frac{t^{d}}{(1-t)^n}\cdot\pi(\cA,t^{-1}-1).$$
\end{cor}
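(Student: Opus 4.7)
The plan is to deduce the corollary by specializing Theorem~\ref{thm_main2} at $y=0$; all of the substantive work has already been carried out in the proof of that theorem. Since $I_0(D)=\mathcal{I}\bigl((1-\epsilon)D\bigr)$ for $0<\epsilon\ll 1$ (as recalled in the introduction), only the $p=0$ term of the generating function $\sum_{p\geq 0}H_{I_p(D_{\mathcal{A}})}(t)\,y^p$ survives upon setting $y=0$, and it equals precisely the Hilbert series $H_I(t)$ that we wish to compute.

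On the right-hand side of the formula in Theorem~\ref{thm_main2}, evaluation at $y=0$ turns the factor $1/(1-t^{d}y)$ into $1$, and the argument $(1-t)/\bigl(t(1-t^{d-1}y)\bigr)$ of the Poincar\'e polynomial reduces to $(1-t)/t=t^{-1}-1$. Substituting these values produces exactly
\[
\frac{t^{d}}{(1-t)^{n}}\cdot\pi(\mathcal{A},t^{-1}-1),
\]
which is the desired expression.

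The only point worth remarking on is that the specialization $y=0$ is legitimate: the identity in Theorem~\ref{thm_main2} is an equality of formal power series in $y$ whose coefficients are rational functions in $t$, and the denominators appearing on the right-hand side of that identity do not vanish at $y=0$, so setting $y=0$ is unambiguous. Consequently there is no real obstacle in the argument; the corollary is a direct instance of the main theorem, specialized to the lowest weight in $y$.
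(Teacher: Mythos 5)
Your proof is correct and follows exactly the paper's own argument: identify $I_0(D_{\cA})$ with the multiplier ideal via \cite[Proposition~10.1]{MP} and set $y=0$ in Theorem~\ref{thm_main2}. The extra remarks on why the specialization is legitimate are fine but not needed.
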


We note that all multiplier ideals of a hyperplane arrangement $\cA$ admit an explicit geometric description as intersections of suitable powers 
of the ideals defining various intersections of hyperplanes in $\cA$ (see \cite{Mustata} and \cite{Teitler}). However, it is not clear to us how to recover the formula in Corollary~\ref{cor_mult_id} from this description.

It is an interesting question whether there is a generalization of the above results to rational coefficients. Recall that one can define 
more generally Hodge ideals $I_p(\alpha D)$ for positive rational numbers $\alpha$ (see \cite{MP0})
and it is thus natural to investigate generating functions of the form $\sum_{p\geq 0}\big[I_p(\alpha D)\big]y^p$. 
The methods in this paper easily extend (at least, in the case when $D$ is defined by a global equation) to describe
this generating function in terms of the equivariant motivic Chern class of a suitable \'{e}tale cyclic cover of the complement of the support of $D$.
However, in the case of a hyperplane arrangement, computing the motivic Chern class of this cover seems to require a new idea.

The paper is structured as follows. In Section 2 we set up some notation and review some basic facts about equivariant Grothendieck groups.
In Section 3 we discuss the motivic Chern class, following \cite{BSY}, especially its description via mixed Hodge module theory. We also discuss the equivariant case, which makes use of Achar's construction of an equivariant derived category of mixed Hodge modules \cite{Achar}.
In Section 4 we prove the formula relating the generating function for the classes of the Hodge ideals in the Grothendieck group and the motivic Chern class. 
We then turn to the setting of hyperplane arrangements. After reviewing the description of the $\C^*$-equivariant Grothendieck group on the affine space in 
Section 5, we compute in Section 6 the equivariant motivic Chern class of the complement of the arrangement and deduce the results in
Theorem~\ref{thm_main2} and Corollary~\ref{cor_main2}. In the last section we discuss an easy example, that of a hyperplane arrangement consisting
of coordinate hyperplanes, in which we give a direct computation of the Hilbert series of the Hodge ideals to recover the formula obtained via
Theorem~\ref{thm_main2}.

\subsection{Acknowledgment} 
We would like to thank J\"{o}rg Sch\"{u}rmann for his detailed comments and suggestions regarding the definition of the motivic Chern class
in the equivariant setting. The second author is also grateful to Mihnea Popa for many discussions. 

\section{Equivariant Grothendieck groups}

In this section we set up some notation and review  basic facts about equivariant Grothendieck groups of algebraic varieties first,
and then about Grothendieck groups of equivariant sheaves. We only consider 
\emph{complex algebraic varieties}, by which we mean reduced, separated schemes of finite type over $\C$,  not necessarily irreducible.
Let $G$ be a linear (that is, affine) algebraic group. A $G$-variety is a variety $X$ with an algebraic action of $G$ and a morphism of
$G$-varieties is assumed to be an equivariant morphism. Given a $G$-variety $X$, a \emph{$G$-variety over $X$} is a $G$-variety $Y$, with a morphism
of $G$-varieties $f\colon Y\to X$.

Given a $G$-variety $X$,
the \emph{Grothendieck group of $G$-varieties over $X$} is the quotient $K^G_0({\rm Var}/X)$ 
of the free Abelian group on isomorphism classes of $G$-varieties over $X$, modulo the \emph{cut-and-paste} relations
$$[Y\to X]=[Z\to X]+[U\to X],$$
for a $G$-variety $Y$ over $X$, a $G$-invariant closed subvariety $Z$ of $Y$, and $U=Y\smallsetminus Z$. 
In fact, this has a ring structure induced by the fiber product of varieties over $X$, with the induced $G$-action.
If $f\colon X'\to X$ is a morphism of $G$-varieties, then every $G$-variety over $X'$ has an induced structure of $G$-variety over $X$ and we get in this way
a morphism of Abelian groups
$$f_!\colon K^G_0({\rm Var}/X')\to K^G_0({\rm Var}/X).$$
If $G=\{1\}$, then we simply write $K_0({\rm Var}/X)$ for $K_0^G({\rm Var}/X)$.

We next turn to the definitions concerning the Grothendieck groups of equivariant sheaves on $X$. For more details about 
equivariant $K$-theory, see \cite{Thomason}. Given a $G$-variety $X$, an equivariant coherent sheaf on $X$ is a coherent sheaf 
$\cF$, together with an isomorphism $\theta^*(\cF)\simeq p_2^*(\cF)$, where $\theta\colon G\times X\to X$ is the action and $p_2\colon G\times X\to X$
is the projection (moreover, this isomorphism is supposed to satisfy an obvious cocycle condition). In particular, we can consider equivariant
locally free sheaves on $X$. We have an obvious notion of equivariant morphism of coherent sheaves.

The \emph{Grothendieck group of equivariant coherent sheaves} on $X$ is the quotient $K^G_0(X)$ of the free Abelian group on isomorphism 
classes of equivariant coherent sheaves, modulo the relations
$$[\cF']-[\cF] +[\cF'']=0,$$
where
$$0\to \cF'\to \cF\to\cF''\to 0$$
is a short exact sequence of equivariant coherent sheaves on $X$. If we replace ``equivariant coherent sheaves" by ``equivariant
locally free sheaves", we obtain the \emph{Grothendieck ring} $K_G^0(X)$ of equivariant locally free sheaves on $X$. This is indeed a commutative ring, with the product induced by tensor product and the multiplicative 
identity given by the structure sheaf. If $G=\{1\}$, then we simply write $K_0(X)$ and $K^0(X)$, respectively, for $K_0^G(X)$ and $K^0_G(X)$.

 Note also that the tensor product makes $K^G_0(X)$ a module over $K^0_G(X)$. By mapping the class of an equivariant locally free sheaf
 to itself, viewed as an equivariant coherent sheaf, we get a morphism of $K^0_G(X)$-modules
 $$K^0_G(X)\to K_0^G(X).$$
 If $f\colon Y\to X$ is a morphism of $G$-varieties, then by pulling-back equivariant locally free sheaves we obtain a ring homomorphism
 $$f^*\colon K_G^0(X)\to K^0_G(Y).$$
 If $f$ is proper, 
 then we also have a group homomorphism
 $$K_0^G(Y)\to K^G_0(X),\quad[\cF]\to\sum_{i\geq 0}(-1)^i\big[R^if_*(\cF)\big].$$

 For a $G$-variety $X$, we will also consider the Abelian group $K^G_0\llparenthesis y\rrparenthesis$ of Laurent power series
 with coefficients in $K^G_0(X)$. This has a natural module structure over the ring $K^0_G\llparenthesis y\rrparenthesis$. We can similarly consider
 $K^G_0\llparenthesis y^{-1}\rrparenthesis$ and $K_G^0\llparenthesis y^{-1}\rrparenthesis$.

 Let $X$ be a fixed $G$-variety.
For an equivariant locally free sheaf $\cE$ on $X$, of rank $r$, we put
$$\lambda_y(\cE):=\sum_{i=0}^r[\wedge^i\cE]y^i\in K_G^0(X)[y].$$
It is easy to check (and well-known) that if we have a short exact sequence
of equivariant locally free sheaves
\begin{equation}\label{eq_exact_seq}
0\to\cE'\to\cE\to\cE''\to 0,
\end{equation}
then
\begin{equation}\label{eq_add_lambda}
\lambda_y(\cE)=\lambda_y(\cE_1)\cdot\lambda_y(\cE_2).
\end{equation}

Similarly, for every equivariant locally free sheaf $\cE$ on $X$, we put
$$s_y(\cE):=\sum_{i\geq 0}(-1)^i[{\rm Sym}^i(\cE)]y^i\in K^0_G\llparenthesis y\rrparenthesis.$$
Again, it is easy to see that given a short exact sequence (\ref{eq_exact_seq}), we have
\begin{equation}\label{eq_add_s}
s_y(\cE)=s_y(\cE_1)\cdot s_y(\cE_2).
\end{equation}

\begin{lem}\label{lem_inverse_lambda}
For a $G$-variety $X$ and an equivariant, locally free sheaf $\cE$ of rank $r$ on $X$, the following hold:
\begin{enumerate}
\item[i)] The inverse of $\lambda_y(\cE)$ in $K_G^0(X)\llbracket y\rrbracket$ is given by $s_y(\cE)$.
\item[ii)] The inverse of $\lambda_{y^{-1}}(\cE)$ in $K_G^0(X)\llparenthesis y\rrparenthesis$ is given by 
$$[{\rm det}(\cE)^{-1}]y^{r}s_{y}(\cE^{\vee}).$$
\end{enumerate}
\end{lem}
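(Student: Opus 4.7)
The plan is to prove (i) via the Koszul complex associated to $\cE$ and then deduce (ii) from (i) using the natural equivariant isomorphism $\wedge^{r-j}\cE\cong\det(\cE)\otimes\wedge^j(\cE^{\vee})$.

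For (i), the coefficient of $y^k$ in the product $\lambda_y(\cE)\cdot s_y(\cE)$ is
$$\sum_{i+j=k}(-1)^j\big[\wedge^i\cE\big]\cdot\big[\Sym^j\cE\big]\in K_G^0(X),$$
and I would produce the vanishing of this for $k\geq 1$ from the weight-$k$ strand of the Koszul resolution of $\cO_X$ over $\Sym^{\bullet}(\cE)$. Concretely, for $k\geq 1$ the complex
$$0\to\wedge^k\cE\to\wedge^{k-1}\cE\otimes\Sym^1\cE\to\cdots\to\Sym^k\cE\to 0$$
is exact (locally, after trivializing $\cE$, this is the classical Koszul complex of the regular sequence of linear coordinates in $\Sym^{\bullet}\cE$), and each term is an equivariant locally free sheaf with equivariant locally free kernels at each stage. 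Hence the alternating sum of their classes vanishes in $K_G^0(X)$, and after a sign adjustment of the form $(-1)^i=(-1)^k(-1)^j$ this is exactly the required identity. Since the constant term of $\lambda_y(\cE)\cdot s_y(\cE)$ is manifestly $1$, this proves (i).

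For (ii), I would re-index the sum defining $\lambda_{y^{-1}}(\cE)$ by $j=r-i$ and apply the equivariant isomorphism $\wedge^{r-j}\cE\cong\det(\cE)\otimes\wedge^j(\cE^{\vee})$ to obtain
$$\lambda_{y^{-1}}(\cE)=y^{-r}\cdot[\det(\cE)]\cdot\lambda_y(\cE^{\vee}).$$
Since $\det(\cE)$ is an equivariant line bundle, $[\det(\cE)]$ is a unit in $K_G^0(X)$ with inverse $[\det(\cE)^{-1}]$. Inverting both sides in $K_G^0(X)\llparenthesis y\rrparenthesis$ and invoking (i) applied to $\cE^{\vee}$ yields exactly the stated formula $[\det(\cE)^{-1}]\,y^r\,s_y(\cE^{\vee})$.

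The only potential worry is the exactness of the Koszul complex in the equivariant setting, but this is really no obstacle: the differentials are built canonically from the identity element of $\cE\otimes\cE^{\vee}$ and so commute with the $G$-action, while exactness is a local condition untouched by the equivariant enhancement. Thus the proof proceeds essentially as in the non-equivariant case.
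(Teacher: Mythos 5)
Your proof is correct, but for part (i) you take a genuinely different route from the paper. The paper first checks the identity for an equivariant line bundle (where it is the geometric series computation), then uses the multiplicativity of $\lambda_y$ and $s_y$ on short exact sequences to handle sheaves filtered by equivariant line bundles, and finally invokes the splitting principle: an equivariant morphism $f\colon \widetilde{Z}\to Z$ (a tower of projective bundles) with $f^*$ injective on $K^0_G$ and $f^*\cE$ filtered by line bundles. Your argument instead proves the vanishing of each coefficient of $y^k$, $k\geq 1$, directly from the exactness of the weight-$k$ strand of the Koszul complex $0\to\wedge^k\cE\to\wedge^{k-1}\cE\otimes\Sym^1\cE\to\cdots\to\Sym^k\cE\to 0$, noting that the differentials are canonical (hence equivariant) and that the successive kernels are again equivariant and locally free, so the alternating sum of classes vanishes in $K_G^0(X)$. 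Your approach is more self-contained in the equivariant setting, since it avoids having to justify the equivariant splitting principle (equivariant projective bundles and the injectivity of $f^*$ on $K^0_G$), at the cost of invoking exactness of the Koszul complex; the paper's approach is shorter on the page given that the multiplicativity formulas (2.2) and (2.3) are already recorded, and reduces everything to the one-line line-bundle case. Part (ii) of your argument --- the re-indexing via $\wedge^{r-j}\cE\cong\det(\cE)\otimes\wedge^j(\cE^{\vee})$ followed by an application of (i) to $\cE^{\vee}$ --- is exactly the paper's argument.
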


\begin{proof}
Note first that the assertion in i) holds if $\cE=\cL$ is a line bundle: in this case 
$\lambda_y(\cL)=1+[\cL]y$,
whose inverse is 
$$\sum_{i\geq 0}(-1)^i[\cL^i]y^i=s_y(\cL).$$
We then deduce using (\ref{eq_add_lambda}) and (\ref{eq_add_s}) that i) holds if $\cE$
admits a filtration whose successive quotients are equivariant line bundles. The general case follows since 
there is a morphism of $G$-varieties $f\colon \widetilde{Z}\to Z$ such that the induced ring homomorphism 
$f^*\colon K^0(Z)\to K^0(\widetilde{Z})$ is injective and such that $f^*(\cE)$ has a filtration 
whose successive quotients are equivariant line bundles (for example, $f$ can be taken a composition 
of suitable projective bundles). 

Note now that if $\cE$ is a locally free sheaf of rank $r$, then for every $i$, with $0\leq i\leq r$, we have
$$\wedge^{r-i}\cE\simeq \wedge^i\cE^{\vee}\otimes_{\cO_X}{\rm det}(\cE).$$
Therefore we can write
$$\lambda_{y^{-1}}(\cE)=y^{-r}\cdot \sum_{i=0}^r[\wedge^i\cE]y^{r-i}=
y^{-r}\cdot \sum_{i=0}^r[\wedge^{r-i}\cE]y^{i}=[{\rm det}(\cE)]y^{-r}\cdot \lambda_{y}(\cE^{\vee}).$$
By i), the inverse of $\lambda_{y^{-1}}(\cE)$ in $K_G^0(X)\llparenthesis y\rrparenthesis$ is then given by 
$[{\rm det}(\cE)^{-1}]y^{r}s_y(\cE^{\vee})$.
\end{proof}

\section{The motivic Chern class}

In this section we review the definition of the motivic Chern class and its description via Saito's theory
of mixed Hodge modules. For the sake of exposition, we first discuss the case when we do not have a group action,
following \cite{BSY}, and then explain the equivariant version.


For a variety $X$, the \emph{motivic Chern class} of Brasselet, Sch\"{u}rmann, and Yokura
is a group homomorphism
$$mC_y\colon K_0({\rm Var}/X)\to K_0(X)[y]$$
(this homomorphism was denoted $mC_*$ in \cite{BSY}, but we prefer
the notation $mC_y$ since it is easy to adapt to a change of variable).
Recall that if $f\colon X'\to X$ is a proper morphism, then we have an induced morphism
of Abelian groups
$f_*\colon K_0(X')\to K_0(X)$ and we extend this to a morphism of Abelian groups $f_*\colon K_0(X')[y]\to K_0(X)[y]$ that maps
$\sum_i\alpha_iy^i$ to  $\sum_if_*(\alpha_i)y^i$. 

With this notation, the motivic Chern class is uniquely characterized by the following two properties:
\begin{enumerate}
\item[1)] If $X$ is smooth and irreducible, of dimension $n$, then\footnote{We make a slight abuse of notation, by denoting
$\lambda_y(\Omega_X)\in K^0(X)[y]$ and its image in $K_0(X)[y]$ via the canonical map $K^0(X)[y]\to K_0(X)[y]$ in the same way.}
$$mC_y(\big[{\rm id}_X]\big)=\lambda_y(\Omega_X).$$
\item[2)] Functoriality with respect to push-forward via proper morphisms:
if $f\colon X'\to X$ is a proper morphism of algebraic varieties, then we have a commutative diagram
$$
\xymatrix{
K_0({\rm Var}/X') \ar[r]^{f_!} \ar[d]_{mC_y} & K_0({\rm Var}/X)
\ar[d]^{mC_y} \\
K_0(X')[y]\ar[r]^{f_*} & K_0(X)[y].
}
$$
\end{enumerate}

It is easy to see, using resolution of singularities, that for every $X$, the Abelian group $K_0({\rm Var}/X)$ 
is generated by $[Y\to X]$, with $Y$ a smooth, irreducible variety, and the morphism $Y\to X$ proper. This immediately 
implies that there is at most one transformation $mC_y$ that satisfies 1) and 2) above. Existence can be proved using
Bittner's presentation \cite{Bittner} in terms of blow-up relations. However, for us it will be useful to have (at least, when $X$ is smooth)
 the explicit description
of the motivic Chern class via the Grothendieck group of mixed Hodge modules, which we now recall. 

Saito constructed in \cite{Saito-MHM}, for every complex algebraic variety $X$, an Abelian category ${\rm MHM}(X)$ consisting
of \emph{mixed Hodge modules}. The corresponding bounded derived category $D^b\big({\rm MHM}(X)\big)$ enjoys a six-functor formalism.
If $X$ is smooth, then a mixed Hodge module consists of a $\cD_X$-module\footnote{In what follows, we always consider 
\emph{left} $\cD_X$-modules.} $\cM$, where $\cD_X$ is the sheaf of differential operators on $X$,
together with a good filtration $F_{\bullet}$ on $\cM$ (the \emph{Hodge filtration}), compatible with the filtration on $\cD_X$ by order of differential operators;
moreover, there is also a rational structure and a weight filtration, but these will not play a role in what follows. We will say that $(\cM,F_{\bullet}\cM)$
\emph{underlies} the corresponding mixed Hodge module. This data is furthermore supposed to satisfy a complex set of conditions.

Every algebraic variety $X$ carries an object $\Q_X^H\in D^b\big({\rm MHM}(X)\big)$. If $X$ is smooth, irreducible, of dimension $n$, then
the shift $\Q_X^H[n]$ is a mixed (in fact, pure) Hodge module, with underlying filtered $\cD_X$-module $(\cO_X,F_{\bullet}\cO_X)$, such that
${\rm Gr}_p^F\cO_X=0$ for $p\neq 0$. 

Recall that if $M$ is a mixed Hodge module on the smooth, irreducible variety $X$, of dimension $n$,
with underlying filtered $\cD_X$-module $(\cM,F_{\bullet}\cM)$, then the 
de Rham complex ${\rm DR}_X(M)$ is the filtered complex
\begin{equation}\label{eq1DeRham}
0\to \cM\to\Omega_X^1\otimes_{\cO_X}\cM\to\ldots\to\Omega_X^n\otimes_{\cO_X}\cM\to 0,
\end{equation}
placed in cohomological degrees $-n,\ldots,0$. The $p^{\rm th}$ graded piece ${\rm Gr}^F_p{\rm DR}_X(M)$ is the
complex of coherent $\cO_X$-modules
\begin{equation}\label{eq2DeRham}
0\to {\rm Gr}^F_p(\cM)\to\Omega_X^1\otimes_{\cO_X}{\rm Gr}_{p+1}^F(\cM)\to\ldots\to\Omega_X^n\otimes_{\cO_X}{\rm Gr}_{p+n}^F(\cM)\to 0.
\end{equation}
Note that if $M=\Q_X^H[n]$, then ${\rm DR}_X(M)$ is the usual algebraic De Rham complex of $X$ (but placed in degrees $-n,\ldots,0$) and
\begin{equation}\label{eq_DR_OX}
{\rm Gr}_{-p}^F{\rm DR}_X\big(\Q_X^H[n]\big)=\Omega_X^p[n-p].
\end{equation}
The above definition extends to give exact functors
$${\rm Gr}^F_p\colon D^b\big({\rm MHM}(X)\big)\to D^b_{\rm coh}(X),$$
where on the right-hand side we have the bounded derived category of coherent sheaves on $X$.

For every variety $X$, we consider the Grothendieck group $K_0\big({\rm MHM}(X)\big)$ of the Abelian category ${\rm MHM}(X)$. This is canonically
identified with the Grothendieck group $K_0\big(D^b({\rm MHM}(X))\big)$ of the corresponding derived category, with the isomorphism
$$K_0\big({\rm MHM}(X)\big)\to K_0\big(D^b({\rm MHM}(X))\big)$$
induced by the embedding ${\rm MHM}(X)\hookrightarrow D^b\big({\rm MHM}(X)\big)$, and whose inverse maps 
$[C^{\bullet}]$ to $\sum_{i\in\Z}(-1)^i[{\mathcal H}^i(C^{\bullet})]$. From now on, we tacitly use this identification.

\begin{rmk}
Note that if $[-]$ is the usual translation functor in a derived category, then it follows from the formula for the above identification that for every
$C^{\bullet}\in D^b\big({\rm MHM}(X)\big)$, if $\alpha$ is the class of $C^{\bullet}$ in $K^0\big({\rm MHM}(X)\big)$, then the class of 
$C^{\bullet}[m]$ is $(-1)^m\alpha$. 
\end{rmk}

For a proper morphism $f\colon X\to Y$, the functor $f_*=f_!\colon D^b\big({\rm MHM}(X)\big)\to D^b\big({\rm MHM}(Y)\big)$
induces a morphism of Abelian groups 
$$f_*\colon K_0\big({\rm MHM}(X)\big)\to K_0\big({\rm MHM}(Y)\big).$$
If $g\colon Y\to Z$ is another proper morphism, then we have $(g\circ f)_*=g_*\circ f_*\colon K_0\big({\rm MHM}(X)\big)\to K_0\big({\rm MHM}(Z)\big)$. 

For every complex algebraic variety $X$, we have a morphism of Abelian groups
$$\chi_{\rm Hdg}\colon K_0({\rm Var}/X)\to K_0\big({\rm MHM}(X)\big)$$
that maps the class of $f\colon Y\to X$ to the class of $f_!(\Q^H_Y)$. For the fact that this is well-defined, see \cite[Section~4]{BSY}.
It is clear that this is compatible with push-forward via proper morphisms: if $f\colon X'\to X$ is such a  morphism, then 
we have a commutative diagram
$$
\xymatrix{
K_0({\rm Var}/X') \ar[r]^{f_!} \ar[d]_{\chi_{\rm Hdg}} & K_0({\rm Var}/X)
\ar[d]^{\chi_{\rm Hdg}} \\
K_0\big({\rm MHM}(X')\big)\ar[r]^{f_*=f_!} & K_0\big({\rm MHM}(X)\big).
}
$$

Suppose now that $X$ is a smooth irreducible variety. We have a morphism of Abelian groups
$$mH_y\colon K_0\big({\rm MHM}(X)\big)\to K_0(X)[y,y^{-1}]$$
which to the class of a mixed Hodge module $M$ associates 
$$\sum_{p\in\Z}\big[{\rm Gr}^F_{-p}{\rm DR}_X(M)\big](-y)^p=\sum_{p,i\in\Z}(-1)^i\big[\cH^i {\rm Gr}^F_{-p}{\rm DR}_X(M)\big](-y)^p.$$
This makes sense since ${\rm Gr}^F_{-p}{\rm DR}_X(M)$ is exact for all but finitely many $p$ and it is well-defined since morphisms of
mixed Hodge modules are strict with respect to the Hodge (and also the weight) filtration. For details, see \cite[Section~4]{BSY}.
It is a consequence of Saito's result concerning the behavior of the de Rham complex with respect to proper push-forwards that this
transformation
is compatible with proper push-forward; more precisely, if
$f\colon X'\to X$ is a proper morphism of smooth varieties, then we have a commutative diagram
$$
\xymatrix{
K_0\big({\rm MHM}(X')\big) \ar[r]^{f_*} \ar[d]_{mH_y} & K_0\big({\rm MHM}(X)\big)
\ar[d]^{mH_y} \\
K_0(X')[y,y^{-1}]\ar[r]^{f_*} & K_0(X)[y,y^{-1}].
}
$$
It is then easy to see that we have
\begin{equation}\label{eq_formula_mC}
mC_y=mH_y\circ\chi_{\rm Hdg}\colon K_0({\rm Var}/X)\to K_0(X)[y,y^{-1}].
\end{equation}
Indeed, since both transformations commute with push-forward by proper maps and since $K_0({\rm Var}/X)$ is generated by classes of
proper morphisms $Y\to X$, with $Y$ smooth and irreducible, it is enough to check that if $Y$ is a smooth, irreducible variety, of dimension $n$,
then 
$$mH_y(\Q_Y^H)=\sum_{p=0}^n[\Omega_Y^p]y^p.$$
This follows from the definition of $mH_y$ and formula (\ref{eq_DR_OX}):
$$mH_y(\Q_Y^H)=(-1)^n\cdot mH_y\big(\Q_Y^H[n]\big)=(-1)^n\cdot\sum_{p\in\Z}[\Omega_Y^p]\cdot (-1)^{n-p}(-y)^p=\sum_{p=0}^n[\Omega_Y^p]y^p.$$

\begin{rmk}\label{rmk_Tate_twist}
Recall that on mixed Hodge modules we have the operation of Tate twisting. Given a mixed Hodge module $M$ with underlying filtered $\cD_X$-module
$(\cM,F_{\bullet}\cM)$, the mixed Hodge module $M(\ell)$ has underlying filtered $\cD_X$-module $(\cM,F_{\bullet-\ell}\cM)$. This implies that
for every $j$ we have 
$${\rm Gr}^F_{-p}{\rm DR}_X\big(M(\ell)\big)={\rm Gr}^F_{-p-\ell}{\rm DR}_X(M),$$
and thus
\begin{equation}\label{eq_Tate_twist}
mH_y\big(M(\ell)\big)=(-y)^{-\ell}\cdot mH_y(M).
\end{equation}
We also note that for every morphism $f\colon X\to Y$, the induced morphism $f_*\colon D^b\big({\rm MHM}(X)\big)\to D^b\big({\rm MHM}(X)\big)$
commutes with Tate twists. 
\end{rmk}

Finally, we need to recall the behavior of the transformation $mH_y$ with respect to duality. For every complex algebraic variety $X$,
we have the contravariant duality functor
$${\mathbf D}\colon D^b\big(MHM(X)\big)\to D^b\big(MHM(X)\big)$$
and an induced morphism of Abelian groups ${\mathbf D}\colon K_0\big({\rm MHM}(X)\big)\to K_0\big({\rm MHM}(X)\big)$.
The duality functor satisfies the following properties:
we have a natural equivalence of functors
\begin{equation}\label{eq1_D}
{\mathbf D}\circ {\mathbf D}\simeq {\rm Id}\quad \text{on}\quad D^b\big({\rm MHM}(X)\big)
\end{equation}
and if $f\colon X\to Y$ is a morphism of smooth algebraic varieties,
then we have a natural equivalence of functors
\begin{equation}\label{eq2_D}
{\mathbf D}\circ f_!\simeq f_*\circ {\mathbf D}\quad\text{on}\quad D^b\big({\rm MHM}(X)\big).
\end{equation}
Duality exhibits the following behavior with respect to Tate twists:
$${\mathbf D}\big(u(m)\big)\simeq \big({\mathbf D}(u))(-m)\quad\text{for every}\quad u\in D^b\big({\rm MHM}(X)\big), m\in\Z.$$
We thus deduce using the last assertion in Remark~\ref{rmk_Tate_twist} and the equivalences (\ref{eq1_D}) and  (\ref{eq2_D}) that for every morphism $f\colon X\to Y$, the functor
$f_!\colon D^b\big({\rm MHM}(X)\big)\to D^b\big({\rm MHM}(X)\big)$ commutes with Tate twists.

Another key property of the duality functor is the following: if $X$ is smooth, then
for every $p$ and every $M\in D^b\big({\rm MHM}(X)\big)$, there is a canonical isomorphism
$${\rm Gr}_{-p}^F{\rm DR}_X({\mathbf D}M)\simeq \RHom\big({\rm Gr}_p^F{\rm DR}_X(M),\omega_X[n]\big),$$
where $n=\dim(X)$ and $\omega_X=\Omega_X^n$ (see \cite[Section~2.4]{Saito-MHP} and also \cite[Lemma~8.4]{Schnell}). 
Consider the morphism of Abelian groups
$\varphi\colon K_0(X)[y,y^{-1}]\to K_0(X)[y,y^{-1}]$ that maps $[\cF]y^i$ to
$\big[\RHom(\cF,\omega_X[n])\big]y^{-i}$ (which is equal to $(-1)^n[\cF^{\vee}\otimes_{\cO_X}\omega_X]y^{-i}$, where
$\cF^{\vee}$ is the dual of $\cF$, if $\cF$ is locally free). With this notation, we see that
\begin{equation}\label{behavior_duality}
mH_y\circ {\mathbf D}=\varphi\circ mH_y\quad\text{on}\quad K_0\big({\rm MHM}(X)\big)
\end{equation}
(see \cite[Corollary~5.19]{Schurmann} for details).


\bigskip

We next turn to the equivariant setting.
Suppose that $G$ is a linear algebraic group and $X$ is a $G$-variety. In this case one can define an equivariant version 
of the motivic Chern class
$$mC_y^G\colon K^G_0({\rm Var}/X)\to K_0^G(X)[y].$$
This is again characterized by the fact that it commutes with proper push-forward and if $X$ is a smooth irreducible $G$-variety,
then
\begin{equation}\label{eq_equivar_char}
mC^G_y\big([{\rm id}_X]\big)=\lambda_y(\Omega_X)\in K_0^G(X)[y].
\end{equation}
Existence can be again proved via an equivariant version of Bittner's presentation of the Grothendieck
group of varieties over $X$ (see \cite{AMSS} and \cite{FRW}). For our purpose, however, it is more important to have
the description via mixed Hodge modules. 

Dealing with mixed Hodge modules in the equivariant setting is more subtle, but fortunately the details have been worked out by
Achar \cite{Achar}. Suppose that $X$ is a $G$-variety. The definition of an equivariant mixed Hodge module on $X$ 
parallels that of an  equivariant coherent sheaf: it 
is a mixed Hodge module $M$,
together with an isomorphism of mixed Hodge modules $\theta^*(M)\simeq p_2^*(M)$, that satisfies the cocycle condition. 
In this way we obtain the Abelian category ${\rm MHM}_G(X)$ and the corresponding Grothendieck group $K_0\big({\rm MHM}_G(X)\big)$.

The subtlety is that in  the equivariant setting, in order to have a 6-functor formalism, one can't simply consider the derived category of
${\rm MHM}_G(X)$ (this issue also arises when constructing the derived category of equivariant coherent sheaves and it is addressed in \cite{BL}). 
Inspired by the construction in \emph{loc. cit.}, Achar constructs a triangulated category  $D^b_G(X)$, together with a bounded nondegenerate
$t$-structure, whose heart is ${\rm MHM}_G(X)$. In particular, the embedding ${\rm MHM}_G(X)\hookrightarrow D^b_G(X)$ induces
a canonical isomorphism of Grothendieck groups
$$K_0\big({\rm MHM}_G(X)\big)\simeq K_0\big(D^b_G(X)\big).$$
For \emph{smooth} varieties,
the category $D^b_G(X)$ enjoys the same 6-functor formalism as $D^b\big({\rm MHM}(X)\big)$ in the non-equivariant case. 

We now turn to equivariant motivic Chern classes (we are grateful to J.~Sch\"{u}rmann for explaining to us some of the 
issues that arise in this setting).
For every smooth $G$-variety $X$, we have a group homomorphism
$$\chi_{\rm Hdg}^G\colon K_0^G\big({\rm Var}/X)\to K_0\big({\rm MHM}_G(X)\big).$$
In order to define this, we follow the approach in \cite[Chapter~4]{AMSS} and 
note first  that if $K_0^G({\rm Sm}/X)$ is the Grothendieck group of $G$-varieties $Y\to X$ over $X$,
with $Y$ smooth, then the natural morphism 
$$K_0^G({\rm Sm}/X)\to K_0^G({\rm Var}/X)$$
is an isomorphism, with the inverse map taking $[Y\to X]$ to $\sum_{i=1}^r[Y_i\to X]$, where $Y=\bigsqcup_{i=1}^rY_i$ is a disjoint union
of $G$-invariant smooth locally closed subsets (the existence of such a decomposition follows easily 
by induction on dimension using the fact that the smooth locus of $Y$ is $G$-invariant). 
The map $\chi_{\rm Hdg}^G$ then sends the 
the class of $f\colon Y\to X$ in $K^0_G({\rm Sm}/X)$ to the element of $K_0\big({\rm MHM}_G(X)\big)$ corresponding to $f_!\Q^H_Y$;
note that in this case $\Q^H_Y$ is an element of $D^b_G(Y)$. The basic properties of $f_!$ for morphisms between smooth varieties guarantee
that this is well-defined and commutes with proper push-forward. 

Suppose now that $X$ is a smooth $G$-variety and $M$ is an equivariant mixed Hodge module on $X$. In this case
each complex ${\rm Gr}^F_pDR_X(M)$ is a complex of equivariant coherent sheaves on $X$. We thus obtain a group homomorphism
$$mH_y^G\colon K_0\big({\rm MHM}_G(X)\big)\to K^G_0(X)[y,y^{-1}]$$
that maps $[M]$ to 
$$\sum_{p\in\Z}\big[{\rm Gr}_{-p}^FDR_X(M)\big](-y)^p\in K^G_0(X)[y,y^{-1}].$$
In order to show that this commutes with proper push-forward, one could argue as follows. First, using Chow's lemma
and resolution of singularities, we see that it is enough to consider projective morphisms. By suitably factoring the morphism,
we can further see that it is enough to treat separately the case of a closed immersion and that of a projection 
$X\times\P^n\to X$. Each case then can be treated as in the proof of \cite[Theorem~2.4]{PS}.

We thus deduce that for every smooth $G$-variety $X$, we have
$mC_y^G=mH_y^G\circ\chi_{\rm Hdg}^G$ on $K^0_G({\rm Var}/X)$; indeed, this follows from the fact that both sides commute
with proper push-forward and take the same value on $1_X$ for every $X$ as above.
 Finally, we also need a version of (\ref{behavior_duality})
in the equivariant setting. In fact, we will only need the corresponding equality on elements in the image of 
$\chi_{\rm Hdg}^G$ (and on Tate twists of such elements). Checking the equality on $\chi^G_{\rm Hdg}\big([{\rm id}_X]\big)$ is easy, using
the definition. By using the compatibility of Grothendieck duality with proper push-forward in the equivariant setting
(see \cite[Theorem~25.2]{Hashimoto}), we deduce that the equality in (\ref{behavior_duality}) holds on 
elements in the image of 
$\chi_{\rm Hdg}^G\colon K^G_0({\rm Var}/X)\to K_0\big({\rm MHM}_G(X)\big)$,
whenever $X$ is a smooth $G$-variety
(recall that $K^G_0({\rm Var}/X)$ is generated by classes of proper morphisms of $G$-varieties $Y\to X$, with $Y$ smooth).
Of course, then the equality also holds also on Tate twists of elements in the image of $\chi_{\rm Hdg}^G$.

\section{Motivic Chern class and Hodge ideals}

Let $G$ be a linear algebraic group and $X$  a smooth, irreducible $G$-variety, of dimension $n$. Given a reduced, $G$-invariant, effective divisor $D$ in $X$, 
we consider the open immersion $j\colon U\hookrightarrow X$, where $U$ is the complement of the support of $D$. Our goal in this section is to relate 
the motivic Chern class of $j$ to the generating function describing the classes of the Hodge ideals of $D$ in the equivariant 
Grothendieck group $K^G_0(X)$. 

Recall that the push-forward $j_*\Q^H_U[n]$ is a mixed Hodge module, whose underlying $\cD_X$-module is
$$\cO_X(*D)=\bigcup_{p\geq 0}\cO_X(pD).$$
The Hodge filtration $F_{\bullet}\cO_X(*D)$ satisfies $F_p\cO_X(*D)=0$ for $p<0$ and
$$F_p\cO_X(*D)\subseteq \cO_X\big((p+1)D\big)\quad\text{for}\quad p\geq 0.$$
The Hodge ideals $I_p(D)$ are characterized by 
$$F_p\cO_X(*D)=I_p(D)\cdot\cO_X\big((p+1)D\big).$$
For details about this setup, see \cite{MP}. 

Since we assume that $D$ is a $G$-invariant divisor, it follows that in our case $j_*\Q^H_U[n]$ has a natural
structure of equivariant mixed Hodge module. In this case, the Hodge ideals $I_p(D)$ are equivariant sheaves 
and we are interested in their classes in $K^G_0(X)$.

The following result allows us to relate the motivic Chern class of the inclusion $U\hookrightarrow X$ with the Hodge filtration on $\cO_X(*D)$.
We consider the equivariant version of the morphism of Abelian groups discussed in the previous section:
$$\varphi\colon K^G_0(X)[y,y^{-1}]\to K^G_0(X)[y,y^{-1}], \quad [\cF]y^i\to 
\big[\RHom(\cF,\omega_X[n])\big]y^{-i}.$$

\begin{prop}\label{first_relation}
With the above notation, we have
$$mH^G_y\big(j_*\Q^H_U[n]\big)=y^n\cdot\varphi\big(mC^G_y(U\hookrightarrow X)\big).$$
\end{prop}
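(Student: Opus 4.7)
The plan is to reduce everything to the formula $mC^G_y = mH^G_y \circ \chi^G_{\mathrm{Hdg}}$ and then use the self-duality of the constant Hodge module on the smooth open $U$. Since $U\hookrightarrow X$ is a smooth variety over $X$, we have $\chi^G_{\mathrm{Hdg}}([U\hookrightarrow X]) = [j_!\Q^H_U]$, so $mC^G_y(U\hookrightarrow X) = mH^G_y(j_!\Q^H_U)$. Hence the goal becomes
$$mH^G_y(j_*\Q^H_U[n]) \;=\; y^n\cdot \varphi\bigl(mH^G_y(j_!\Q^H_U)\bigr).$$

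The key input is that $\Q^H_U[n]$ is Verdier self-dual up to Tate twist on the smooth $n$-dimensional variety $U$, namely $\mathbf{D}(\Q^H_U[n])\simeq \Q^H_U[n](n)$. Combined with $\mathbf{D}^2\simeq \mathrm{Id}$ from (\ref{eq1_D}) and $\mathbf{D}\circ j_!\simeq j_*\circ \mathbf{D}$ from (\ref{eq2_D}) (valid because $j$ is a map of smooth $G$-varieties), applying $j_*$ to $\Q^H_U[n]\simeq \mathbf{D}(\Q^H_U[n])(-n)$ yields
$$j_*\Q^H_U[n]\;\simeq\; j_*\mathbf{D}(\Q^H_U[n])(-n)\;\simeq\; \mathbf{D}(j_!\Q^H_U[n])(-n).$$

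Now apply $mH^G_y$ to both sides. Using (\ref{eq_Tate_twist}) for the Tate twist $(-n)$ contributes a factor $(-y)^n$; then the duality compatibility (\ref{behavior_duality}) converts $\mathbf{D}$ into $\varphi$; finally, the shift rule $[C^\bullet[m]] = (-1)^m[C^\bullet]$ gives $mH^G_y(j_!\Q^H_U[n]) = (-1)^n mH^G_y(j_!\Q^H_U)$. Assembling these,
\begin{align*}
mH^G_y(j_*\Q^H_U[n]) &= (-y)^n\, mH^G_y\bigl(\mathbf{D}(j_!\Q^H_U[n])\bigr) \\
&= (-y)^n\, \varphi\bigl(mH^G_y(j_!\Q^H_U[n])\bigr) \\
&= (-y)^n\cdot(-1)^n\, \varphi\bigl(mH^G_y(j_!\Q^H_U)\bigr) \\
&= y^n\,\varphi\bigl(mC^G_y(U\hookrightarrow X)\bigr),
\end{align*}
which is the claim.

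The main point to watch carefully is the applicability of the equivariant duality identity (\ref{behavior_duality}): as the authors explicitly note, it is established only on the image of $\chi^G_{\mathrm{Hdg}}$ (together with Tate twists thereof). This is precisely where we use it here, since $[j_!\Q^H_U[n]] = (-1)^n\,\chi^G_{\mathrm{Hdg}}([U\hookrightarrow X])$ lies (up to a sign) in that image. The rest of the argument is routine bookkeeping with the two distinct sign conventions — one from shifts in $D^b_G(X)$ that contribute $(-1)^m$ to classes in $K_0$, the other from Tate twists that contribute $(-y)^{-\ell}$ under $mH^G_y$ — and the involutivity of $\varphi$.
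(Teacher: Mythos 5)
Your proposal is correct and follows essentially the same route as the paper: both rest on the polarization isomorphism $\mathbf{D}\big(\Q^H_U[n]\big)\simeq\Q^H_U[n](n)$, the exchange $\mathbf{D}\circ j_!\simeq j_*\circ\mathbf{D}$, the Tate-twist rule (\ref{eq_Tate_twist}), and the duality compatibility (\ref{behavior_duality}), with only a cosmetic difference in where the twist $(-n)$ versus $(n)$ is placed. Your sign bookkeeping and your remark on the applicability of (\ref{behavior_duality}) to elements of the image of $\chi^G_{\rm Hdg}$ are both accurate.
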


\begin{proof}
The key point is that $\Q^H_U[n]$ is a \emph{pure} Hodge module of weight $n$. The choice of a polarization thus gives
an isomorphism
$${\mathbf D}\big(\Q^H_U[n]\big)\simeq\Q^H_U[n](n).$$
On the other hand, using (\ref{eq1_D}) and (\ref{eq2_D}), we see that $\cO_X(*D)$ is the filtered $\cD_X$-module underlying
$$j_*\Q_U^H[n]\simeq j_*{\mathbf D}{\mathbf D}\big(\Q_U^H[n]\big)\simeq {\mathbf D}j_!\big(\Q^H_U[n](n)\big).$$
If we apply $mH^G_y$, we deduce using (\ref{behavior_duality}) that
$$mH^G_y\big(j_*\Q_U^H[n]\big)=\varphi\left(mH^G_y((j_!\Q_U^H)[n](n))\right)$$
and we conclude using (\ref{eq_formula_mC}), (\ref{eq_Tate_twist}), and $\chi^G_{\rm Hdg}(U\hookrightarrow X)=j_!\Q^H_U$ that
$$mH^G_y\big(j_*\Q_U^H[n]\big)=(-1)^n\varphi\big((-y)^{-n}\cdot mC^G_y(U\hookrightarrow X)\big)=y^n\cdot\varphi\big(mC^G_y(U\hookrightarrow X)\big).$$
This completes the proof of the proposition.
\end{proof}

It is convenient to also have the following invariant that records directly the classes in $K^G_0(X)$ of the graded pieces in the
Hodge filtration of an equivariant mixed Hodge module. If $X$ is a smooth irreducible variety of dimension $n$ and if $(\cM,F_{\bullet}\cM)$ is the filtered
$\cD_X$-module underlying an equivariant mixed Hodge module $M$, then we put
$$\chi^G_y(M):=\sum_{p\in\Z}\big[{\rm Gr}^F_{p}(\cM)\big]y^p\in K^G_0(X)\llparenthesis y\rrparenthesis.$$
Note that $\chi^G_y(M)$ is a Laurent power series in $y$ since $F_{p}\cM=0$ for $p\ll 0$.

\begin{prop}\label{chi_and_mH}
If $X$ is a smooth irreducible $G$-variety of dimension $n$, then
for every equivariant mixed Hodge module $M$, we have in $K^G_0(X)\llparenthesis y\rrparenthesis$ the equality
$$\chi^G_{-y}(M)=[\omega_X^{-1}](-y)^ns_{y}(T_X)\cdot mH^G_{y^{-1}}(M),$$
where $T_X$ is the tangent sheaf of $X$.
\end{prop}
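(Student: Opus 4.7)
The proof is essentially a formal computation that unpacks both sides using the explicit form of the graded de Rham complex and then inverts a single factor via Lemma \ref{lem_inverse_lambda}(ii). There is no real obstacle beyond careful bookkeeping of signs.

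Write $(\cM, F_\bullet\cM)$ for the filtered $\cD_X$-module underlying $M$, and set $G_k := {\rm Gr}^F_k\cM$. Using the description (\ref{eq2DeRham}) of ${\rm Gr}^F_{-p}{\rm DR}_X(M)$ as a complex of coherent sheaves in cohomological degrees $-n,\ldots,0$, its class in $K_0^G(X)$ is $\sum_{i=0}^n (-1)^{i-n}[\Omega^i_X]\cdot[G_{-p+i}]$. Plugging this into the definition $mH^G_y(M) = \sum_p [{\rm Gr}^F_{-p}{\rm DR}_X(M)](-y)^p$ yields a double sum over $p$ and $i$, which I would rearrange by introducing the new index $k := -p+i$. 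The sign identity $(-1)^{i-n}(-y)^{i-k} = (-1)^n y^i (-y)^{-k}$ makes the double sum factor cleanly as
\[
mH^G_y(M) = (-1)^n\,\lambda_y(\Omega_X)\cdot\sum_k [G_k](-y)^{-k}.
\]
Replacing $y$ by $y^{-1}$ and recognizing $\sum_k [G_k](-y)^{k}$ as $\chi^G_{-y}(M)$ (a well-defined Laurent power series in $y$ since $G_k = 0$ for $k\ll 0$), this becomes
\[
mH^G_{y^{-1}}(M) = (-1)^n\,\lambda_{y^{-1}}(\Omega_X)\cdot \chi^G_{-y}(M).
\]

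To finish, I would multiply both sides by the inverse of $\lambda_{y^{-1}}(\Omega_X)$ in $K^0_G(X)\llparenthesis y\rrparenthesis$. By Lemma \ref{lem_inverse_lambda}(ii) applied to $\cE = \Omega_X$ (with $\det \Omega_X = \omega_X$ and $\Omega_X^\vee = T_X$), this inverse equals $[\omega_X^{-1}]y^n s_y(T_X)$. Multiplying through yields
\[
\chi^G_{-y}(M) = (-1)^n[\omega_X^{-1}]y^n s_y(T_X)\cdot mH^G_{y^{-1}}(M) = [\omega_X^{-1}](-y)^n s_y(T_X)\cdot mH^G_{y^{-1}}(M),
\]
which is the stated identity. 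The one point requiring attention is that $mH^G_y(M)$ and $\lambda_{y^{\pm 1}}(\Omega_X)$ are Laurent polynomials in $y$, $s_y(T_X)$ is a power series in $y$, and $\chi^G_{-y}(M)$ has $y$-support bounded below, so every step takes place in the common ring $K^G_0(X)\llparenthesis y\rrparenthesis$, where the inversion from the lemma is valid.
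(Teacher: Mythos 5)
Your proof is correct and follows essentially the same route as the paper: expand $mH^G_y(M)$ via the graded de Rham complex, reindex to obtain $mH^G_{y^{-1}}(M)=(-1)^n\lambda_{y^{-1}}(\Omega_X)\cdot\chi^G_{-y}(M)$, and then invert $\lambda_{y^{-1}}(\Omega_X)$ using Lemma~\ref{lem_inverse_lambda}(ii). Your closing remark about where the various series live and why the inversion is legitimate is a point the paper leaves implicit.
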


\begin{proof}
It follows from the definition of $mH^G_y(M)$ that we can write in $K^G_0(X)\llparenthesis y^{-1}\rrparenthesis$
$$mH^G_y(M)=\sum_{p\in\Z}[{\rm Gr}^F_{-p}{\rm DR}_X(M)](-y)^p=\sum_{p\in\Z}\sum_{i=0}^n(-1)^{n+i}[\Omega_X^i]\cdot \big[{\rm Gr}^F_{-p+i}(\cM)\big](-y)^{p}$$
$$=(-1)^n\cdot\left(\sum_{i=0}^n[\Omega_X^i]y^i\right)\cdot\left(\sum_{q\in\Z}\big[{\rm Gr}^F_q(\cM)](-y)^{-q}\right)=(-1)^n\lambda_y(\Omega_X^1)\cdot
\chi^G_{-y^{-1}}(M).$$
After replacing $y$ by $y^{-1}$ we obtain in $K^G_0(X)\llparenthesis y\rrparenthesis$ the equality
$$mH^G_{y^{-1}}(M)=(-1)^n\lambda_{y^{-1}}(\Omega_X^1)\cdot \chi^G_{-y}(M).$$
We multiply both sides by $[\omega_X^{-1}](-y)^{n}s_{y}(T_X)$, and using the assertion in Lemma~\ref{lem_inverse_lambda} ii),
we conclude 
$$[\omega_X^{-1}](-y)^{n}s_{y}(T_X)\cdot mH^G_{y^{-1}}(M)=\chi^G_{-y}(M).$$
This completes the proof of the proposition.
\end{proof}

By combining Propositions~\ref{first_relation} and \ref{chi_and_mH}, we obtain the formula for the generating function for the classes
of Hodge ideals.

\begin{thm}\label{thm_main1}
Let $X$ be a smooth irreducible $n$-dimensional $G$-variety.
If $D$ is a $G$-invariant reduced effective divisor on $X$ and
$U=X\smallsetminus {\rm Supp}(D)$, then we have the following equality in $K^G_0(X)\llparenthesis y\rrparenthesis$
$$\sum_{p\geq 0}[I_p(D)]y^p=(-1)^na[\omega_X^{-1}](1-ay)^{-1}s_{-ay}(T_X)\cdot \varphi\big(mC^G_{-a^{-1}y^{-1}}(U\hookrightarrow X)\big),$$
where $a=[\cO_X(-D)]\in K_G^0(X)$.
\end{thm}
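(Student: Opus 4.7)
The plan is to evaluate both Propositions~\ref{first_relation} and \ref{chi_and_mH} at the equivariant mixed Hodge module $M = j_*\Q^H_U[n]$. Its underlying filtered $\cD_X$-module is $(\cO_X(*D),F_\bullet)$ with $F_p\cO_X(*D) = I_p(D)\cdot\cO_X((p+1)D)$ for $p\geq 0$ and $F_p=0$ for $p<0$, so $\chi^G_y(M)$ will encode the generating series of the Hodge ideals, while $mH^G_y(M)$ is controlled by the motivic Chern class of $j$ through Proposition~\ref{first_relation}. Equating the two resulting expressions for $\chi^G_{-y}(M)$ will produce the claimed identity.

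First I would relate $\chi^G_y(M)$ to the series $Y(y):=\sum_{p\geq 0}[I_p(D)]y^p$. Since $\cO_X\big((p+1)D\big)$ is an equivariant line bundle, tensoring with it is exact, so $[F_p]=[I_p(D)]\cdot a^{-(p+1)}$ in $K^G_0(X)$, where $a=[\cO_X(-D)]$. Taking classes in the short exact sequences $0\to F_{p-1}\to F_p\to {\rm Gr}^F_p\cO_X(*D)\to 0$ (with $F_{-1}=0$) and summing gives
$$\chi^G_y(M) \;=\; (1-y)\sum_{p\geq 0}[F_p]y^p \;=\; (1-y)\,a^{-1}\,Y(a^{-1}y),$$
and replacing $y$ by $-y$ produces $\chi^G_{-y}(M)=(1+y)a^{-1}Y(-a^{-1}y)$.

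On the other hand, Proposition~\ref{first_relation} with $y$ replaced by $y^{-1}$ gives $mH^G_{y^{-1}}(M)=y^{-n}\varphi\big(mC^G_{y^{-1}}(U\hookrightarrow X)\big)$, and feeding this into the formula from Proposition~\ref{chi_and_mH} yields
$$\chi^G_{-y}(M)\;=\;[\omega_X^{-1}](-y)^n s_y(T_X)\cdot mH^G_{y^{-1}}(M)\;=\;(-1)^n[\omega_X^{-1}]\,s_y(T_X)\cdot\varphi\big(mC^G_{y^{-1}}(U\hookrightarrow X)\big).$$
Equating the two expressions for $\chi^G_{-y}(M)$ and solving for $Y(-a^{-1}y)$ gives
$$Y(-a^{-1}y)\;=\;(-1)^n a\,[\omega_X^{-1}](1+y)^{-1}\,s_y(T_X)\cdot\varphi\big(mC^G_{y^{-1}}(U\hookrightarrow X)\big),$$
and the final substitution $y\mapsto -ay$ (which sends $-a^{-1}y\mapsto y$) transforms this into exactly the identity asserted in the theorem.

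There is no real obstacle — once the two ingredients are combined, everything reduces to tracking substitutions in Laurent power series. The only point to verify is that each substitution is meaningful: $mC^G_y(U\hookrightarrow X)$ is a polynomial in $y$, so $mC^G_{-a^{-1}y^{-1}}$ is a Laurent polynomial in $y^{-1}$, which $\varphi$ converts back into a polynomial in $y$; the remaining factors $s_{-ay}(T_X)$ and $(1-ay)^{-1}$ are genuine power series in $y$, so the right-hand side lies in $K^G_0(X)\llparenthesis y\rrparenthesis$ and is in fact a power series, matching the shape of the left-hand side.
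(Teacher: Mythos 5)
Your proposal is correct and follows essentially the same route as the paper's own proof: both combine Propositions~\ref{first_relation} and \ref{chi_and_mH} applied to $j_*\Q^H_U[n]$, express $\chi^G_{-y}$ of that module as $(1+y)a^{-1}\sum_p[I_p(D)](-a^{-1}y)^p$ via $[F_p]=[I_p(D)]\,a^{-(p+1)}$, and finish with the substitution $y\mapsto -ay$. The bookkeeping of signs and substitutions in your version matches the paper's computation exactly.
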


\begin{proof}[Proof of Theorem~\ref{thm_main1}]
Let us write
$$L(y):=\sum_{p\geq 0}[I_p(D)]y^p\in K^G_0(X)\llparenthesis y\rrparenthesis.$$
Note that by combining Propositions~\ref{first_relation} and \ref{chi_and_mH}, we obtain
$$\chi_{-y}^G\big(j_*\Q^H_U[n]\big)=[\omega_X^{-1}](-y)^ns_y(T_X)\cdot mH^G_{y^{-1}}\big(j_*\Q^H_U[n]\big)$$
$$=(-1)^n[\omega_X^{-1}]s_y(T_X)\cdot\varphi\big(mC^G_{y^{-1}}(U\hookrightarrow X)\big).$$
On the other hand, since 
$${\rm Gr}_p\cO_X(*D)=F_p\cO_X(*D)/F_{p-1}\cO_X(*D)$$
and 
$$F_p\cO_X(*D)\simeq I_p(D)\otimes_{\cO_X}\cO_X\big((p+1)D\big),$$
it follows that if we put $b=a^{-1}=[\cO_X(D)]\in K_G^0(X)$, then 
$$\chi_{-y}^G\big(j_*\Q^H_U[n]\big)=\sum_{p\geq 0}b^{p+1}[I_p(D)](-y)^p-\sum_{p\geq 0}b^{p+1}[I_p(D)](-y)^{p+1}$$
$$=bL(-by)+byL(-by)=b(1+y)L(-by).$$
We thus conclude that 
$$L(-by)=(-1)^na[\omega_X^{-1}](1+y)^{-1}s_y(T_X)\cdot\varphi\big(mC^G_{y^{-1}}(U\hookrightarrow X)\big).$$
After replacing $y$ by $-ay$, we obtain the formula in the theorem.
\end{proof}

\section{The ${\mathbf C}^*$-equivariant Grothendieck group of coherent sheaves on an affine space}

In this section we review the well-known description of the Grothendieck group $K_0^T(\A^n)$ of $\C^*$-equivariant sheaves on $\A^n$
via Hilbert functions. On the complex affine space $\A^n$, we consider
the standard action of $T=\C^*$, given by
$$\lambda\cdot (u_1,\ldots,u_n)=(\lambda u_1,\ldots,\lambda u_n).$$
In this case, the category of $T$-equivariant coherent sheaves on $\A^n$ is equivalent to the category 
of finitely generated $\Z$-graded $S$-modules, where $S=\C[x_1,\ldots,x_n]$, with the standard grading. Recall that for a $\Z$-graded $S$-module $M$ and
for $q\in\Z$, the graded $S$-module $M(q)$ has the same underlying $S$-module, but $M(q)_j=M_{q+j}$. 

Given a finitely generated $\Z$-graded module $M=\oplus_{j\in\Z}M_j$, we consider its Hilbert series
$$H_M(t)=\sum_{j\in \Z}\dim_{\C}(M_j)t^j\in\Z\llparenthesis t\rrparenthesis.$$
Note first that if 
$$0\to M'\to M\to M''\to 0$$
is a short exact sequence of finitely generated graded $S$-modules, then 
$H_M(t)=H_{M'}(t)+H_{M''}(t)$. By 
mapping the class $[M]$ of $M$ to $H_M(t)$, we obtain a group
homomorphism
$$K_0^T(\A^n)\to \Z\llparenthesis t\rrparenthesis.$$

\begin{prop}\label{prop_Kgroup}
By mapping $[M]$ to $(1-t)^n\cdot H_M(t)$, we obtain 
a group isomorphism 
$$\tau_n\colon K_0^T(\A^n)\to \Z[t,t^{-1}].$$
Moreover, $K_0^T(\A^n)$ is freely generated by $\big[S(q)\big]$, for $q\in \Z$,
and $\tau_n\big(S(q)\big)=t^{-q}$ for every $q\in\Z$. 
\end{prop}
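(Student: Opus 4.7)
The plan is to compare $K_0^T(\A^n)$ with a free abelian group on symbols indexed by $\Z$ and check that $\tau_n$ induces a bijection with $\Z[t,t^{-1}]$. First I would verify that $\tau_n$ is well-defined. By the Hilbert--Serre theorem, for every finitely generated $\Z$-graded $S$-module $M$ the power series $(1-t)^n H_M(t)$ is in fact a Laurent polynomial in $\Z[t,t^{-1}]$. Combined with the additivity of Hilbert series on short exact sequences already noted in the text, the formula $[M]\mapsto (1-t)^n H_M(t)$ descends to a homomorphism $\tau_n\colon K_0^T(\A^n)\to \Z[t,t^{-1}]$. A direct computation using $\dim_\C S_k=\binom{k+n-1}{n-1}$ for $k\geq 0$ gives $H_{S(q)}(t)=t^{-q}/(1-t)^n$, and hence $\tau_n([S(q)])=t^{-q}$.

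Next I would show that the classes $[S(q)]$ for $q\in\Z$ generate $K_0^T(\A^n)$. The key input is the graded version of Hilbert's syzygy theorem: every finitely generated $\Z$-graded $S$-module $M$ admits a finite graded free resolution $0\to F_r\to\cdots\to F_1\to F_0\to M\to 0$, where each $F_i$ is a finite direct sum of modules of the form $S(q_{ij})$. Splitting this into short exact sequences and using the relations in the equivariant Grothendieck group yields $[M]=\sum_{i=0}^r(-1)^i[F_i]$, which is a $\Z$-linear combination of the $[S(q)]$.

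Finally, let $\phi\colon \bigoplus_{q\in\Z}\Z\cdot e_q\to K_0^T(\A^n)$ be the homomorphism sending $e_q\mapsto [S(q)]$. The previous paragraph shows $\phi$ is surjective, and the composition $\tau_n\circ\phi$ sends $e_q\mapsto t^{-q}$, which is visibly an isomorphism onto $\Z[t,t^{-1}]$ since the monomials $t^{-q}$ form a $\Z$-basis. Hence $\phi$ is also injective, so $\phi$ is an isomorphism; this simultaneously shows that $\tau_n$ is an isomorphism and that the classes $[S(q)]$ form a free $\Z$-basis of $K_0^T(\A^n)$.

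The only nontrivial ingredient is the graded Hilbert syzygy theorem; once that is invoked the argument is formal. A minor bookkeeping subtlety is keeping track of the shifts $q_{ij}$ appearing in the resolution, but only the \emph{existence} of a finite resolution by finitely generated graded free $S$-modules is needed, so this is standard.
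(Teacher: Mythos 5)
Your proposal is correct and follows essentially the same route as the paper: Hilbert's syzygy theorem to show the classes $[S(q)]$ generate, the computation $H_{S(q)}(t)=t^{-q}/(1-t)^n$, and the observation that these generators map to the $\Z$-basis $\{t^{-q}\}$ of $\Z[t,t^{-1}]$, which simultaneously gives freeness and the isomorphism. Your extra remark on well-definedness via Hilbert--Serre is a minor point the paper leaves implicit (and is in any case automatic once generation by the $[S(q)]$ is known), so there is no substantive difference.
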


\begin{proof}
It is a consequence of Hilbert's Syzygy Theorem that $K_0^T(\A^n)$
is generated by $\big[S(q)\big]$, for $q\in\Z$. On the other hand, 
a basic computation gives 

\begin{equation}\label{eq_Hilbert_fcn}
H_{S(q)}(t)=t^{-q}\cdot H_S(t)=\frac{t^{-q}}{(1-t)^n}.
\end{equation}

In particular, we see that the Laurent polynomials $(1-t)^n\cdot H_{S(q)}(t)$, for $q\in\Z$, satisfy no linear relations over $\Z$. 
This implies that the isomorphism classes of the $S(q)$, with $q\in\Z$, give a basis of $K_0^T(\A^n)$.
The first assertion in the proposition is now clear as well.
\end{proof}

\begin{rmk}\label{rmk_description_K}
Consider a linear embedding $j\colon \A^m\hookrightarrow\A^n$. Note that this is $T$-equivariant, hence we have
a push-forward homomorphism $j_*\colon K^T_0(\A^m)\to K^T_0(\A^n)$. It follows from the definition of the
isomorphism in Proposition~\ref{prop_Kgroup} that we have a commutative diagram
$$\xymatrix{K_0^T(\A^m)\ar[d]_{\tau_m}\ar[r]^{j_*} & K_0^T(\A^n) \ar[d]_{\tau_n}\\
\Z[t,t^{-1}]\ar[r]^{(1-t)^{n-m}} & \Z[t, t^{-1}],
}$$
in which the bottom horizontal map is given by multiplication with $(1-t)^{n-m}$.
\end{rmk}

\begin{eg}\label{eg_subspace}
If $H$ is a linear subspace of $\C^n$ of dimension $m$, then via the isomorphism in Proposition~\ref{prop_Kgroup},
$[\cO_H]\in K_0^T(\A^n)$ corresponds to $(1-t)^{n-m}$.
\end{eg}

From now on, we will tacitly use the identification of $K_0^T(\A^n)$ with $\Z[t,t^{-1}]$ provided by 
Proposition~\ref{prop_Kgroup} and the corresponding identification of $K_0^T(\A^n)\llparenthesis y\rrparenthesis$
with $\Z[t,t^{-1}]\llparenthesis y\rrparenthesis$. 

\begin{rmk}\label{ring_isom}
Note that the canonical group homomorphism
$$K^0_T(\A^n)\to K^T_0(\A^n)$$
is an isomorphism (this holds, more generally, on smooth quasi-projective varieties, but in our case it follows easily from 
Proposition~\ref{prop_Kgroup}, since the classes of $S(q)$, for $q\in\Z$, also generate $K^0_T(\A^n)$ by Hilbert's Syzygy Theorem).

In particular, we see that $K^T_0(\A^n)$ has a ring structure. The advantage of using the isomorphism $\tau_n$ in Proposition~\ref{prop_Kgroup},
as opposed to the one given by the Hilbert series, is that $\tau_n$ is a ring isomorphism. In order to check this, it is enough to note that
$$\tau_n\big([S(p)]\cdot [S(q)]\big)=\tau_n\big([S(p+q)]\big)=t^{-p-q}=t^{-p}\cdot t^{-q}=\tau_n\big([S(p)]\big)\cdot \tau_n\big([S(q)]\big).$$
\end{rmk}

\begin{eg}\label{eg_tangent_space}
Let's compute the image of $s_y(T_{\A^n})$ via $\tau_n$. Note that $T_{\A^n}$ is the sheaf associated to $S(1)^{\oplus n}$, hence it follows from 
formula (\ref{eq_add_s}) that
$$s_y(T_{\A^n})=s_y\big(S(1)\big)^n.$$
On the other hand, since $\big[S(1)\big]=t^{-1}$, we have
$$s_y\big(S(1)\big)=\sum_{i\geq 0}(-1)^it^{-i}y^i=\frac{1}{1+t^{-1}y}.$$
We thus conclude that 
$$s_y(T_{\A^n})=\frac{1}{(1+t^{-1}y)^n}.$$
\end{eg}

\begin{eg}\label{eg_mC}
Arguing as in the previous example, we see that
$$\lambda_y(\Omega_{\A^n})=\lambda_y\big(S(-1)^{\oplus n}\big)=\lambda_y\big(S(-1)\big)^n=(1+ty)^n.$$
By property (\ref{eq_equivar_char}) in the characterization of the equivariant motivic Chern class, we thus have
$$mC^T_y\big([{\rm id}_{\A^n}]\big)=\lambda_y(\Omega_{\A^n})=(1+ty)^n.$$
If $\Lambda$ is a linear subspace of $\A^n$ of dimension $m$ and $j\colon \Lambda\hookrightarrow X$
is the inclusion, we deduce using Remark~\ref{rmk_description_K} and the fact that the motivic Chern class 
commutes with proper push-forwards that
$$mC^T_y\big([\Lambda\hookrightarrow \A^n]\big)=j_*\left(mC^T_y\big([{\rm id}_{\Lambda}]\big)\right)=(1-t)^{n-m}(1+ty)^{m}.$$
\end{eg}

\begin{rmk}\label{description_phi}
The involution 
$$\varphi\colon K_0^T(\A^n)[y,y^{-1}]\to K_0^T(\A^n)[y,y^{-1}]$$
that appears in Proposition~\ref{first_relation}, can be very easily described via the isomorphism $\tau_n$.
Recall that in general, if $\cE$ is the class of an equivariant locally free sheaf, then $\varphi$ maps $[\cE]y^m$ to 
$(-1)^n[\cE^{\vee}\otimes_{\cO_{\A^n}}\omega_{\A^n}]y^{-m}$. Since $\omega_{\A^n}$ corresponds to $S(-n)$, this implies that
$\varphi$ maps $\big[S(-q)\big]y^m$ to $(-1)^n\big[S(q-n)\big]y^{-m}$. In other words, $\varphi$ is the additive involution on $\Z[t^{\pm 1},y^{\pm 1}]$
that maps each $P(t,y)$ to $(-1)^nt^n\cdot P(t^{-1},y^{-1})$.
\end{rmk}

\section{The case of hyperplane arrangements}

Let $V$ be an $n$-dimensional complex vector space that we identify with $\A^n$. We consider a
\emph{hyperplane arrangement} $\cA$ in $V$; this is a collection of distinct hyperplanes in $V$.
We always assume that $\cA$ is \emph{central}, that is, all hyperplanes 
pass through the origin. Let $d$ be the number of hyperplanes in $\cA$ and put $D_{\cA}=\sum_{H\in\cA}H$, hence $D_{\cA}$ is a reduced effective divisor.
Moreover, it is clear that $D_{\cA}$ is $T$-invariant with respect to the standard action of $T=\C^*$ on $V$.
We denote by $U_{\cA}$ the complement of the support of $D_{\cA}$ in $V$.

We begin by briefly recalling some basic invariants attached to $\cA$. For details, we refer to \cite{OT}. 
The \emph{intersection lattice} $L(\cA)$ consists of all intersections of hyperplanes in $\cA$, ordered by reverse inclusion.
Note that this has a unique minimal element, namely $V$. 
The \emph{M\"{o}bius function} of $L(\cA)$ is the function $\mu\colon L(\cA)\times L(\cA)\to\Z$ characterized by the following properties:
\begin{enumerate}
\item[i)] $\mu(W,W)=1$ for every $W\in L(\cA)$.
\item[ii)] If $W_1<W_2$, then $\sum_{W_1\leq Z\leq W_2}\mu(W_1,Z)=0$.
\item[iii)] If $W_1\not\leq W_2$, then $\mu(W_1,W_2)=0$.
\end{enumerate}
For every $W\in\cL(\cA)$, we put $\mu(W):=\mu(V,W)\in\Z$.

The \emph{Poincar\'{e} polynomial} of $\cA$ is 
$$\pi(\cA,x)=\sum_{W\in L(\cA)}\mu(W)(-x)^{{\rm codim}(W,V)}\in \Z[x].$$
This is a fundamental invariant of $\cA$ (for example, it is equal to the Poincar\'{e} polynomial of $U_{\cA}$, see \cite[Theorem~5.93]{OT}).
It is sometimes convenient to also consider the \emph{characteristic polynomial} of $\cA$, given by 
\begin{equation}\label{def_char_poly}
\chi(\cA,x):=x^n\cdot \pi(\cA,-x^{-1}).
\end{equation}

We will make use of the following property, known as \emph{Deletion-Restriction}, which allows computing the characteristic
polynomial by induction on the number of hyperplanes in $\cA$. If $H_0$ is a hyperplane in $\cA$, then we consider 
the arrangement $\cA'$ in $V$ consisting of all hyperplanes in $\cA$ different from $H_0$ and the arrangement $\cA''$ in $H_0$
consisting of all distinct $H_0\cap H$, for $H\in\cA'$. The Deletion-Restriction property of the characteristic polynomial says that,
with this notation, we have
\begin{equation}\label{eq_DelRes}
\pi(\cA,x)=\pi(\cA',x)+x\cdot\pi(\cA'',x)
\end{equation}
(see \cite[Theorem~2.56]{OT}).

\begin{eg}\label{eg1_Poincare}
If $V=\C^n$ and $\cA$ is the union of $d\leq n$ hyperplanes in general position (that is, such that the sum of the hyperplanes 
has simple normal crossings), then it is easy to see that 
$$\pi(\cA,x)=(1+x)^d$$
(see \cite[Proposition~2.44]{OT} for the formula for the M\"{o}bius function in this case). 
\end{eg}


Our first goal is to compute the class of $U_{\cA}\hookrightarrow V$ in $K_0^T({\rm Var}/V)$.

\begin{prop}\label{class_hypCompl}
If $\eta_V\in K_0^T({\rm Var}/V)$ is the class of a hyperplane in $V$, then
$$[U_{\cA}\hookrightarrow V]=\pi(\cA,-\eta_V).$$
\end{prop}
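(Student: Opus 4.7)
The plan is to use M\"{o}bius inversion on the intersection lattice $L(\cA)$. For each $W \in L(\cA)$ I introduce the open stratum $W^\circ := W \setminus \bigcup_{W' \in L(\cA),\, W' > W} W'$, so that $V$ decomposes as a disjoint union of $T$-invariant locally closed subvarieties $V = \bigsqcup_{W \in L(\cA)} W^\circ$, and more generally each $W_0 \in L(\cA)$ decomposes as $W_0 = \bigsqcup_{W \geq W_0} W^\circ$. The key observation is that $U_{\cA}$ is exactly $V^\circ$, the open stratum of the minimum element $V \in L(\cA)$.

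The cut-and-paste relations in $K_0^T({\rm Var}/V)$ then yield the family of identities $[W_0 \hookrightarrow V] = \sum_{W \geq W_0} [W^\circ \hookrightarrow V]$, for every $W_0 \in L(\cA)$. Standard M\"{o}bius inversion in the finite lattice $L(\cA)$ converts this to $[W_0^\circ \hookrightarrow V] = \sum_{W \geq W_0} \mu(W_0, W)\,[W \hookrightarrow V]$, and specializing to $W_0 = V$ gives
\[ [U_{\cA} \hookrightarrow V] = \sum_{W \in L(\cA)} \mu(W)\,[W \hookrightarrow V]. \]

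To finish, I compare with the definitional expansion $\pi(\cA, -\eta_V) = \sum_{W \in L(\cA)} \mu(W)\,\eta_V^{\codim(W, V)}$ and match summand by summand. The main point requiring care is the identification of $\eta_V^k$ with the class $[W \hookrightarrow V]$ of a codimension-$k$ $T$-invariant linear subspace of $V$: a priori two different codim-$k$ linear subspaces might carry distinct classes in $K_0^T({\rm Var}/V)$. One natural way to finesse this is to argue that all such classes do coincide, exploiting the $\mathrm{GL}(V)$-action (which commutes with the $T$-action by scalars) together with an explicit $T$-equivariant cut-and-paste argument connecting any two $T$-stable codim-$k$ subspaces. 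An alternative route that sidesteps this altogether is induction on $d = |\cA|$: for $H_0 \in \cA$ with restriction $\cA'' := \{H_0 \cap H : H \in \cA \setminus \{H_0\}\}$, the decomposition $U_{\cA \setminus \{H_0\}} = U_{\cA} \sqcup U_{\cA''}$, the deletion-restriction formula $\pi(\cA, x) = \pi(\cA \setminus \{H_0\}, x) + x\,\pi(\cA'', x)$, and push-forward along $H_0 \hookrightarrow V$ together produce matching recursions on both sides.
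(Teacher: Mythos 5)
Your primary argument is correct and takes a genuinely different route from the paper's. The paper proves the identity by induction on $d=|\cA|$ via Deletion--Restriction --- exactly your fallback at the end --- and explicitly remarks that the M\"{o}bius-inversion argument you give first (it is Aluffi's approach to the class of $U_{\cA}$ in $K_0({\rm Var}/\C)$) ``would work'' here as well. Your derivation of the invariant form of the statement,
$$[U_{\cA}\hookrightarrow V]\;=\;\sum_{W\in L(\cA)}\mu(W)\,[W\hookrightarrow V],$$
via the stratification $V=\bigsqcup_W W^{\circ}$ and inversion of the cut-and-paste identities $[W_0\hookrightarrow V]=\sum_{W\geq W_0}[W^{\circ}\hookrightarrow V]$, is complete and matches the paper's M\"{o}bius convention. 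What your route buys is a one-shot, recursion-free argument valid for any intersection-closed stratification; what Deletion--Restriction buys is that one never inverts anything and the recursion matches the polynomial identity $\pi(\cA,x)=\pi(\cA',x)+x\,\pi(\cA'',x)$ term by term.

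The one step to be careful about is the final identification $[W\hookrightarrow V]=\eta_V^{\codim(W,V)}$, and there your first suggested fix does not work: two distinct codimension-$k$ linear subspaces do \emph{not} have equal classes in $K_0^T({\rm Var}/V)$. Composing the forgetful map to $K_0({\rm Var}/V)$ with the realization sending $[f\colon Y\to V]$ to the constructible function $x\mapsto \chi_c\big(f^{-1}(x)\big)$ sends a closed immersion to the indicator function of its image, so distinct subspaces are distinguished; the $\GL(V)$-action identifies them as abstract $T$-varieties but changes the structure map to $V$, hence induces an automorphism of $K_0^T({\rm Var}/V)$ rather than an equality of classes, and a cut-and-paste comparison leaves the nonzero difference $[H_1\smallsetminus H_2]-[H_2\smallsetminus H_1]$. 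This ambiguity is already present in the paper's own phrasing of $\eta_V$ (``the class of a hyperplane'') and is harmless for the application, since the proposition is only used after applying $mC^T_y$ and, by Example~\ref{eg_mC}, the motivic Chern class of $[\Lambda\hookrightarrow V]$ depends only on $\dim\Lambda$. So either stop at the displayed sum over $L(\cA)$ above, which is all that is actually needed downstream, or run your second, Deletion--Restriction argument, which is the paper's proof.
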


Similar results have appeared before in the literature: for example, Aluffi showed in \cite[Theorem~1.1]{Aluffi}
that the class of $U_{\cA}$ in 
$K_0({\rm Var}/\C)$ is equal to $\chi(\cA,{\mathbf L})$, where ${\mathbf L}$ is the class of $\A^1$ in $K_0({\rm Var}/\C)$. Note that this is compatible with 
the formula in the above proposition, since $\eta_V^i\in K_0^T({\rm Var}/V)$ maps to ${\mathbf L}^{n-i}$ in $K_0({\rm Var}/\C)$. 

The proof in \emph{loc. cit.} proceeds by making use of the definition of the characteristic polynomial and general properties of the M\"{o}bius function.
The same argument would work in our setting, but we proceed differently, by making use of the Deletion-Restriction formula. 

\begin{proof}[Proof of Proposition~\ref{class_hypCompl}]
It follows from the definition of the product in $K_0^T({\rm Var}/V)$ that if $W$ is a linear subspace of $V$, of codimension $r$,
then $[W\hookrightarrow V]=\eta_V^r\in K_0^T({\rm Var}/V)$. This immediately implies that if $H_0$ is a hyperplane in $V$ and 
$\iota\colon H_0\hookrightarrow V$ is the inclusion, then 
\begin{equation}\label{eq2_class_hypCompl}
\iota_*(\eta_{H_0}^m)=\eta_V^{m+1}\quad\text{for every}\quad m\geq 0,
\end{equation}
where $\eta_{H_0}\in  K_0^T({\rm Var}/H_0)$ is the class of a hyperplane in $H_0$. 

We prove the assertion in the proposition by induction on the number $d$ of hyperplanes in $\cA$. If $d=1$, then it follows immediately from
the definition of the Poincar\'{e} polynomial that $\pi(\cA,x)=1+x$ and
$$[U_{\cA}\hookrightarrow V]=1-\eta_V=\pi(\cA,-\eta_V).$$

For the induction step, choose a hyperplane $H_0$ in $\cA$ and let $\cA'$ and $\cA''$ be the hyperplane arrangements defined from $\cA$ and $H_0$,
which appear in the Deletion-Restriction formula. 
It is clear that we have
$U_{\cA}\subseteq U_{\cA'}$ and 
$$U_{\cA'}\smallsetminus U_{\cA}=U_{\cA''}.$$
We thus conclude that
$$[U_{\cA}\hookrightarrow V]=[U_{\cA'}\hookrightarrow V]-\iota_*\big([U_{\cA''}\hookrightarrow H_0]\big).$$
Using the induction hypothesis and formula (\ref{eq2_class_hypCompl}), we obtain
$$[U_{\cA}\hookrightarrow V]=\pi(\cA',-\eta_V)-\iota_*\big(\pi({\cA''},-\eta_{H_0})\big)=
\pi(\cA',-\eta_V)-\eta_{V}\cdot\pi(\cA'',-\eta_V)=\pi(\cA,-\eta_V),$$
where the last equality follows from the Deletion-Restriction formula (\ref{eq_DelRes}).
This completes the proof of the proposition.
\end{proof}

In order to give the formula for the equivariant motivic Chern class of $U_{\cA}$, it is more convenient to use
the characteristic polynomial of $\cA$. 

\begin{cor}\label{mC_hyp_arr}
Via the isomorphism $\tau_n$ in Proposition~\ref{prop_Kgroup}, the
equivariant motivic Chern class 
$mC_y^{T}\big([U_{\cA}\hookrightarrow V]\big)\in K_0^T(V)[y]$ corresponds to
$(1-t)^n\cdot \chi\big(\cA, (1+ty)/(1-t)\big)$.
\end{cor}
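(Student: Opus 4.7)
The plan is a direct computation combining the three ingredients already in place: the formula for $[U_{\cA}\hookrightarrow V]$ in $K_0^T({\rm Var}/V)$, the additivity of $mC_y^T$, and the explicit value of $mC_y^T$ on linear subspaces.

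First, I would invoke Proposition~\ref{class_hypCompl}, which gives
\[
[U_{\cA}\hookrightarrow V]=\pi(\cA,-\eta_V)=\sum_{W\in L(\cA)}\mu(W)\,\eta_V^{r_W}\quad\text{in}\quad K_0^T({\rm Var}/V),
\]
where $r_W:={\rm codim}(W,V)$ and where I have used $-\pi(\cA,-x)$'s sign convention together with the identity $(-\eta_V)^{r_W}\cdot(-1)^{r_W}=\eta_V^{r_W}$ baked into the definition $\pi(\cA,x)=\sum_W\mu(W)(-x)^{r_W}$. Since, as noted in the proof of that proposition, $\eta_V^{r_W}=[W\hookrightarrow V]$ for a linear subspace $W$ of codimension $r_W$, applying the group homomorphism $mC_y^T$ gives
\[
mC_y^T\big([U_{\cA}\hookrightarrow V]\big)=\sum_{W\in L(\cA)}\mu(W)\,mC_y^T\big([W\hookrightarrow V]\big).
\]

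Next, I would plug in Example~\ref{eg_mC}, which under $\tau_n$ identifies
\[
mC_y^T\big([W\hookrightarrow V]\big)=(1-t)^{r_W}(1+ty)^{n-r_W}.
\]
Substituting and factoring out $(1+ty)^n$ yields
\[
mC_y^T\big([U_{\cA}\hookrightarrow V]\big)=(1+ty)^n\sum_{W\in L(\cA)}\mu(W)\left(\frac{1-t}{1+ty}\right)^{\!r_W}=(1+ty)^n\cdot\pi\!\left(\cA,-\frac{1-t}{1+ty}\right),
\]
where the last equality is again the definition of $\pi(\cA,x)$.

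Finally, I would convert the $\pi$-expression to a $\chi$-expression using the identity $\pi(\cA,-x^{-1})=x^{-n}\chi(\cA,x)$ from (\ref{def_char_poly}), applied with $x=(1+ty)/(1-t)$. This turns $\pi\!\big(\cA,-(1-t)/(1+ty)\big)$ into $\big((1-t)/(1+ty)\big)^n\chi\!\big(\cA,(1+ty)/(1-t)\big)$, and the factor $(1+ty)^n$ cancels precisely, producing
\[
mC_y^T\big([U_{\cA}\hookrightarrow V]\big)=(1-t)^n\cdot\chi\!\left(\cA,\frac{1+ty}{1-t}\right),
\]
as asserted. There is no real obstacle here; the only care needed is tracking signs and exponents in the passage between $\pi$ and $\chi$, and confirming that $\tau_n$ is compatible with the additive structure so that one can apply $mC_y^T$ termwise to the sum over $L(\cA)$.
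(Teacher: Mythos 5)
Your proposal is correct and follows essentially the same route as the paper: both rest on Proposition~\ref{class_hypCompl}, the additivity of $mC_y^T$ applied to classes of linear subspaces as computed in Example~\ref{eg_mC}, and the relation (\ref{def_char_poly}) between $\pi$ and $\chi$. The only difference is bookkeeping — you expand the sum over the intersection lattice and convert $\pi$ to $\chi$ at the end, whereas the paper works directly with the coefficients of $\chi(\cA,x)$ — and your sign and exponent tracking checks out.
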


\begin{proof}
For every $i\geq 0$, we see that $\eta_V^i\in K_0^T({\rm Var}/V)$ is the class $[\Lambda_i\hookrightarrow V]$, where $\Lambda_i$
is a linear subspace of $V$ of codimension $i$. It follows that the equivariant motivic Chern class maps  $\eta_V^i$ to the element in
$K_0^T({\rm Var}/V)$ that corresponds to $(1-t)^i(1+ty)^{n-i}$ via $\tau_n$
(see Example~\ref{eg_mC}). If the characteristic polynomial of $\cA$ is $\chi(\cA,x)=\sum_{i=0}^na_ix^{n-i}$, then
$\pi(\cA,x)=(-1)^n\sum_{i=0}^n(-1)^{n-i}a_ix^i$. By Proposition~\ref{class_hypCompl}, we see that
$$[U_{\cA}\hookrightarrow V]=\sum_{i=0}^na_i\eta_V^i.$$
It follows that the equivariant motivic Chern class maps this to the element of $K_0^T({\rm Var}/V)$ that, via $\tau_n$, corresponds to
$$\sum_{i=0}^na_i(1-t)^i(1+ty)^{n-i}=(1-t)^n\cdot \chi\big(\cA, (1+ty)/(1-t)\big).$$
\end{proof}

\begin{rmk}
After a first version of this article was made public, we became aware of a recent result of Liao, giving a similar formula for the motivic Chern class
of ${\mathbf P}(U_{\cA})\to {\mathbf P}(V)$, where ${\mathbf P}(U_{\cA})$ is the complement  of the corresponding projective hyperplane arrangement in the
projective space ${\mathbf P}(V)$ of lines in $V$ (see \cite[Theorem~5.2]{Liao}). 
\end{rmk}

We can now prove the formula for the generating function of the Hilbert series of 
the Hodge ideals of $D_{\cA}$, stated in the Introduction.

\begin{proof}[Proof of Theorem~\ref{thm_main2}]
We use the formula in Theorem~\ref{thm_main1} and make explicit the terms in that formula via the isomorphism $\tau_n$ in Proposition~\ref{prop_Kgroup}.
Recall that by definition, for every graded $S$-module $M$, we have
$$H_M(t)=\frac{1}{(1-t)^n}\tau_n\big([M]\big).$$
It follows that the formula in the statement of the theorem holds if we show, identifying $K_0^T(V)$ to $\Z[t,t^{-1}]$ via $\tau_n$, that
$$\sum_{p\geq 0}\big[I_p(D_{\cA})]y^p=\frac{t^{d}}{(1-t^dy)}\cdot\pi\big(\cA,(1-t)/t(1-t^{d-1}y)\big).$$

We put $S={\rm Sym}^{\bullet}(V)$, with the standard grading.
Since $\cA$ consists of $d$ hyperplanes, we have
$$a:=\big[\cO_V(-D_{\cA})\big]=\big[S(-d)\big]=t^d.$$
This implies that 
\begin{equation}\label{eq_101}
(1-ay)^{-1}=(1-t^dy)^{-1}.
\end{equation}
Note also that 
\begin{equation}\label{eq_102}
[\omega_V^{-1}]=\big[S(n)\big]=t^{-n}.
\end{equation}

We have seen in Example~\ref{eg_tangent_space} that
$$s_y(T_V)=\frac{1}{(1+t^{-1}y)^n},$$
hence
\begin{equation}\label{eq_103}
s_{-ay}(T_V)=\frac{1}{(1-t^{d-1}y)^n}.
\end{equation}

Finally, we need to compute $\varphi\big(mC^T_{-a^{-1}y^{-1}}(U_{\cA}\hookrightarrow V)\big)$. Note first that
Corollary~\ref{mC_hyp_arr} gives
$$mC_y^T(U_{\cA}\hookrightarrow V)=(1-t)^n\chi\big(\cA, (1+ty)/(1-t)\big),$$
and thus
$$mC^T_{-a^{-1}y^{-1}}(U_{\cA}\hookrightarrow V)=(1-t)^n\chi\big(\cA,(1-t^{-(d-1)}y^{-1})/(1-t)\big).$$
Using the description of $\varphi$ in Remark~\ref{description_phi}, we obtain
$$\varphi\big(mC^T_{-a^{-1}y^{-1}}(U_{\cA}\hookrightarrow V)\big)
=(-1)^nt^n(1-t^{-1})^n\cdot \chi\big(\cA, (1-t^{d-1}y)/(1-t^{-1})\big)$$
$$=(1-t)^n\cdot
\chi\big(\cA, t(1-t^{d-1}y)/(t-1)\big)=(-1)^nt^n(1-t^{d-1}y)^n\cdot\pi\big(\cA,(1-t)/t(1-t^{d-1}y)\big),$$
where the last equality follows from (\ref{def_char_poly}).
We thus conclude using (\ref{eq_101}), (\ref{eq_102}), (\ref{eq_103}), and Theorem~\ref{thm_main1} 
that
$$\sum_{p\geq 0}\big[I_p(D_{\cA})\big]y^p=\frac{t^{d}}{(1-t^dy)}\cdot\pi\big(\cA,(1-t)/t(1-t^{d-1}y)\big).$$
This completes the proof of the theorem.
\end{proof}

\begin{proof}[Proof of Corollary~\ref{cor_main2}]
Recall that the first Hodge ideal $I_0(D_{\cA})$ can be identified with the multiplier ideal $\cI\big((1-\epsilon)D_{\cA}\big)$, for $0<\epsilon\ll 1$ (see \cite[Proposition~10.1]{MP}). 
By making $y=0$ in Theorem~\ref{thm_main2}, we obtain the assertion in the corollary.
\end{proof}

\section{An example: simple normal crossing arrangements}

If $\cA$ consists of $d\leq n$ linear hyperplanes in $V=\C^n$ in general position, then it follows from
Example~\ref{eg1_Poincare} and Theorem~\ref{thm_main2} that
\begin{equation}\label{eq_gen_position}
\sum_{k\geq 0}H_{I_k(D_{\cA})}(t)y^k=\frac{(1-t^dy)^{d-1}}{(1-t)^n(1-t^{d-1}y)^d}.
\end{equation}

In this section we give a direct computation of the Hilbert functions of the ideals $I_k(D_{\cA})$ in this case,
using the explicit description of these ideals, and recover the formula in  (\ref{eq_gen_position}). 
After a suitable linear change of coordinates, we may assume that $D_{\cA}$ is the divisor defined by $f=\prod_{i=1}^dx_i$.

Recall that in this case, the Hodge filtration is given by 
$$F_p\cO_V(*D_{\cA})=F_p\cD_V\cdot \cO_V(D_{\cA}).$$
This implies that $F_p\cO_V(*D_{\cA})$ is generated over $S=\C[x_1,\ldots,x_n]$ by the Laurent monomials
$x_1^{a_1}\cdots x_d^{a_d}$, with $a_i\leq -1$ for $1\leq i\leq d$ and $\sum_{i=1}^da_i\geq-(d+p)$
(see \cite[Section~8]{MP}). It is then easy to see that $F_p\cO_V(*D_{\cA})$ has a basis over $\C$ given by
the Laureant monomials $x_1^{b_1}\cdots x_n^{b_n}$ that satisfy
\begin{equation}\label{eq1_example_SNC}
\quad\quad\sum_{i=1}^d\min\{b_i,-1\}\geq -(d+p)\quad\quad\text{and}\quad b_i\geq 0\quad\text{for}\quad i>d.
\end{equation}

Since $F_p\cO_V(*D_{\cA})$ is $\Z^n$-graded, with respect to the standard $\Z^n$-grading on the ring $\C[x_1,\ldots,x_n]$,
it is convenient to first compute 
$$H_p(t_1,\ldots,t_n):=\sum_{(b_1,\ldots,b_n)}t_1^{b_1}\cdots t_n^{b_n},$$
where $(b_1,\ldots,b_n)$ runs over the tuples that satisfy (\ref{eq1_example_SNC}). It is then clear that $H_p(t,\ldots,t)$ is the Hilbert series
of $F_p\cO_V(*D_{\cA})$. 

We write
$H_p=\sum_{J}H_p^J$, where $J$ varies over all subsets of $\{1,\ldots,d\}$, and where $H_p^J$ is the sum of all monomials in $H_p$ that correspond to those 
$(b_1,\ldots,b_n)$ such that $b_i<0$ if and only if $i\in J$. Note that if $(b_1,\ldots,b_n)$ satisfies this condition, then 
it also satisfies (\ref{eq1_example_SNC}) if and only if $\sum_{i\in J}b_i\geq -p-|J|$. 
We thus obtain
$$H_p(t_1,\ldots,t_n)=\sum_{q=0}^d\sum_{|J|=q}\left(\prod_{i\not\in J}\frac{1}{1-t_i}\cdot\prod_{\sum_{i\in J}b_i\geq -p-q}\prod_{i\in J}t_i^{b_i}\right).$$
Given $J$ with $|J|=q$ and
 $(b_i)_{i\in J}$, if we write $b_i=-1-\gamma_i$, the conditions $b_i<0$ for all $i\in J$ and $\sum_{i\in J}b_i\geq -p-q$ are equivalent to
$\gamma_i\geq 0$ for all $i\in J$ and $\sum_{i\in J}\gamma_i\leq p$. Note that 
we have ${{m+q-1}\choose m}$ such tuples with $\sum_{i\in J}\gamma_i=m\leq p$
and for every such $(\gamma_i)_{i\in J}$, we have $\sum_{i\in J}b_i=-q-m$.
Since we have ${d\choose q}$ subsets of $\{1,\ldots,d\}$ with $q$ elements, 
we obtain
$$H_{F_p\cO_V(*D_{\cA})}(t)=H_p(t,\ldots,t)=\sum_{q=0}^d{d\choose q}\frac{1}{(1-t)^{n-q}}\cdot\sum_{m=0}^p{{m+q-1}\choose m}t^{-q-m}.$$

We can now compute the generating function
$$\sum_{p\geq 0}H_{F_p\cO_V(*D_{\cA})}(t)y^p=\sum_{p\geq 0}\sum_{q=0}^d{d\choose q}\frac{1}{(1-t)^{n-q}}\cdot\sum_{m=0}^p{{m+q-1}\choose m}t^{-q-m}
y^p.$$
In order to do this, let us write
$$\sum_{p\geq 0}\sum_{m=0}^p{{m+q-1}\choose m}t^{-q-m}y^p=\sum_{m\geq 0}\sum_{p\geq m}{{m+q-1}\choose m}t^{-q-m}y^p$$
$$=\frac{1}{(1-y)}\cdot\sum_{m\geq 0}{{m+q-1}\choose m}t^{-q-m}y^m=\frac{t^{-q}}{(1-y)(1-t^{-1}y)^q}.$$
We thus see that 
$$\sum_{p\geq 0}H_{F_p\cO_V(*D_{\cA})}(t)y^p=\sum_{q=0}^d{d\choose q}\frac{t^{-q}}{(1-t)^{n-q}(1-y)(1-t^{-1}y)^q}$$
$$=\frac{1}{(1-t)^{n-d}(1-y)}\left(\frac{t^{-1}}{1-t^{-1}y}+\frac{1}{(1-t)}\right)^d
=\frac{t^{-d}(1-y)^{d-1}}{(1-t)^n(1-t^{-1}y)^d}.$$
Since 
$$F_p\cO_V(*D_{\cA})\simeq I_p(D_{\cA})\otimes_{\cO_V}\cO_V\big((p+1)D_{\cA}\big)\simeq I_p(D_{\cA})\otimes_{\cO_V}\cO_V\big(d(p+1)\big),$$
it follows that
$$H_{I_p(D_{\cA})}(t)=t^{d(p+1)}\cdot H_{F_p\cO_V(*D_{\cA})}(t).$$
We conclude that
$$\sum_{p\geq 0}H_{I_p(D_{\cA})}(t)y^p=t^d\cdot\sum_{p\geq 0}H_{F_p\cO_V(*D_{\cA})}(t)(t^dy)^p
=\frac{(1-t^dy)^{d-1}}{(1-t)^n(1-t^{d-1}y)^d}.$$
We thus recover the formula in (\ref{eq_gen_position}).

\section*{References}
\begin{biblist}

\bib{Achar}{article}{
author={Achar, P.},
title={Equivariant mixed Hodge modules},
journal={preprint, Lecture notes from the Clay Mathematics Institute workshop on Mixed Hodge Modules and Applications},
pages={available at \textit{https://www.math.lsu.edu/}${\tilde{}}$\textit{pramod/docs/emhm.pdf}},
date={2013},
}

\bib{Aluffi}{article}{
   author={Aluffi, P.},
   title={Grothendieck classes and Chern classes of hyperplane arrangements},
   journal={Int. Math. Res. Not. IMRN},
   date={2013},
   number={8},
   pages={1873--1900},
}

\bib{AMSS}{article}{
author={Aluffi, P.},
author={Mihalcea, L.},
author={Sch\"{u}rmann, J.},
author={Su, C.},
title={Motivic Chern classes of Schubert cells, Hecke algebras, and applications to Casselman's problem},
journal={preprint arXiv:1902.10101},
date={2019},
}

\bib{BL}{book}{
   author={Bernstein, J.},
   author={Lunts, V.},
   title={Equivariant sheaves and functors},
   series={Lecture Notes in Mathematics},
   volume={1578},
   publisher={Springer-Verlag, Berlin},
   date={1994},
   pages={iv+139},
}

\bib{Bittner}{article}{
   author={Bittner, F.},
   title={The universal Euler characteristic for varieties of characteristic
   zero},
   journal={Compos. Math.},
   volume={140},
   date={2004},
   number={4},
   pages={1011--1032},
}

\bib{BSY}{article}{
   author={Brasselet, J.-P.},
   author={Sch\"{u}rmann, J.},
   author={Yokura, S.},
   title={Hirzebruch classes and motivic Chern classes for singular spaces},
   journal={J. Topol. Anal.},
   volume={2},
   date={2010},
   number={1},
   pages={1--55},
}

\bib{FRW}{article}{
author={Feh\'{e}r, L. M.},
author={Rim\'{a}nyi, R.},
author={Weber, A.},
title= {Motivic Chern classes and K-theoretic stable envelopes},
journal={preprint arXiv:1802.01503},
date={2018},
}


\bib{Hashimoto}{article}{
   author={Hashimoto, M.},
   title={Equivariant twisted inverses},
   conference={
      title={Foundations of Grothendieck duality for diagrams of schemes},
   },
   book={
      series={Lecture Notes in Math.},
      volume={1960},
      publisher={Springer, Berlin},
   },
   date={2009},
   pages={261--478},
}

\bib{Liao}{article}{
   author={Liao, X.},
   title={K-theoretic defect in Chern class identity for a free divisor},
   journal={Int. Math. Res. Not. IMRN},
   date={2019},
   number={19},
   pages={6113--6135},
}

\bib{Mustata}{article}{
   author={Musta\c{t}\u{a}, M.},
   title={Multiplier ideals of hyperplane arrangements},
   journal={Trans. Amer. Math. Soc.},
   volume={358},
   date={2006},
   number={11},
   pages={5015--5023},
}

\bib{MP0}{article}{
   author={Musta\c{t}\u{a}, M.},
   author={Popa, M.},
   title={Hodge ideals for ${\bf Q}$-divisors: birational approach},
   journal={J. \'{E}c. polytech. Math.},
   volume={6},
   date={2019},
   pages={283--328},
  }

\bib{MP}{article}{
   author={Musta\c{t}\u{a}, M.},
   author={Popa, M.},
   title={Hodge ideals},
   journal={Mem. Amer. Math. Soc.},
   volume={262},
   date={2019},
   number={1268},
   pages={v+80},
}

\bib{OT}{book}{
   author={Orlik, P.},
   author={Terao, H.},
   title={Arrangements of hyperplanes},
   series={Grundlehren der Mathematischen Wissenschaften [Fundamental
   Principles of Mathematical Sciences]},
   volume={300},
   publisher={Springer-Verlag, Berlin},
   date={1992},
   pages={xviii+325},
}

\bib{PS}{article}{
   author={Popa, Mihnea},
   author={Schnell, Christian},
   title={Generic vanishing theory via mixed Hodge modules},
   journal={Forum Math. Sigma},
   volume={1},
   date={2013},
   pages={e1, 60},
}

\bib{Saito-MHP}{article}{
   author={Saito, M.},
   title={Modules de Hodge polarisables},
   journal={Publ. Res. Inst. Math. Sci.},
   volume={24},
   date={1988},
   number={6},
   pages={849--995},
}

\bib{Saito-MHM}{article}{
   author={Saito, M.},
   title={Mixed Hodge modules},
   journal={Publ. Res. Inst. Math. Sci.},
   volume={26},
   date={1990},
   number={2},
   pages={221--333},
}


\bib{Schnell}{article}{
   author={Schnell, C.},
   title={On Saito's vanishing theorem},
   journal={Math. Res. Lett.},
   volume={23},
   date={2016},
   number={2},
   pages={499--527},
  }

\bib{Schurmann}{article}{
   author={Sch\"{u}rmann, J.},
   title={Characteristic classes of mixed Hodge modules},
   conference={
      title={Topology of stratified spaces},
   },
   book={
      series={Math. Sci. Res. Inst. Publ.},
      volume={58},
      publisher={Cambridge Univ. Press, Cambridge},
   },
   date={2011},
   pages={419--470},
}

\bib{Teitler}{article}{
   author={Teitler, Z.},
   title={A note on Musta\c{t}\u{a}'s computation of multiplier ideals of hyperplane
   arrangements},
   journal={Proc. Amer. Math. Soc.},
   volume={136},
   date={2008},
   number={5},
   pages={1575--1579},
}

\bib{Thomason}{article}{
   author={Thomason, R. W.},
   title={Algebraic $K$-theory of group scheme actions},
   conference={
      title={Algebraic topology and algebraic $K$-theory},
      address={Princeton, N.J.},
      date={1983},
   },
   book={
      series={Ann. of Math. Stud.},
      volume={113},
      publisher={Princeton Univ. Press, Princeton, NJ},
   },
   date={1987},
   pages={539--563},
}

\end{biblist}

\end{document}